\newtheorem{theorem}{Theorem}[section]
\newtheorem{lemma}[theorem]{Lemma}
\newtheorem{notation}[theorem]{Notation}
\newtheorem{corollary}[theorem]{Corollary}
\newtheorem{proposition}[theorem]{Proposition}
\theoremstyle{definition}
\newtheorem{definition}[theorem]{Definition}
\newtheorem{example}[theorem]{Example}
\newtheorem{question}[theorem]{Question}
\newtheorem{remark}[theorem]{Remark}
\numberwithin{equation}{section}
\newcommand{\T}{\mathbb{T}}
\newcommand{\Z}{\mathbb{Z}}
\newcommand{\N}{\mathbb{N}}
\newcommand{\R}{\mathbb{R}}
\renewcommand{\theta}{\vartheta}
\newcommand{\eps}{\varepsilon}
\renewcommand{\phi}{\varphi}
\newcommand{\Hom}{{\rm Hom}}
\newcommand{\id}{{\rm id}}
\newcommand{\dis}{\displaystyle}
\newcommand{\cal}{\mathcal}
\def\SS_{{\mathfrak S}_{qc}}
\begin{document}
\title{Linear topologies on $\Z$ are not Mackey topologies }

\author{Lydia Aussenhofer}
\address{Universit\"at Passau, Fakult\"at f\"ur Informatik und Mathematik, Innstr. 33, D-94032 Passau}
\email{lydia.aussenhofer@uni-passau.de}

\author{Daniel de la Barrera Mayoral}
\address{Departamento de Geometr\'{\i}a y Topolog\'{\i}a, Universidad Complutense de Madrid,
E-28040 Madrid, Spain}
\email{danielbarreramayoral@gmail.com}

\thanks{The  second named author wishes to thank   the Katholische Universit\"at Eichst\"att/Ingolstadt for
an invitation which enabled him to work on this topic.}

\subjclass[2000]{Primary 33D80, 6A17, 22A25, 54H11; Secondary 11F85}


\keywords{Mackey-topology, linear topology, locally quasi-convex group, $p$-adic topology, compatible group topology}

\begin{abstract}
Given a linear non-discrete topology $\lambda$ on the integers, it will be shown, that there exists a strictly finer metrizable locally quasi-convex group topology   $\tau$ on $\Z$ such that $(\Z,\lambda)^\wedge=(\Z,\tau)^\wedge$ (algebraically).

Applying this result to the $p$-adic topology on $\Z$, we give a negative answer to the question of Dikranjan, whether
this topology is Mackey.
\end{abstract}

\maketitle


\section{Introduction and Notation}

  In the framework of the theory of topological vector spaces the ''Mackey topology" is an important topic.
  Given a   $K$-vector space $ E $ and a (point-separating) subspace $F$ of ${\cal L}(E,K)$, the
  algebraic dual of $E$, there exists a finest (Hausdorff) vector space topology $\mu_F$ on $E$ such that
  $\mu_F$ is locally convex and the topological dual of $(E,\mu_F)$ is exactly $F$. The existence is
  a direct consequence of the Hahn-Banach theorem:  it is the supremum of all locally convex vector space topologies
  with topological dual $F$. If $(E,\tau)$ is a metrizable locally convex vector space and $F$ is the topological dual $(E,\tau)'$ then $\tau$ coincides with $\mu_F$.

  In \cite{mackey} the analogue of the Mackey topology for locally quasi-convex groups has been introduced and in \cite{BK} a categorical
  approach is presented.

   Given an abelian topological group $(G,\tau)$, the set of all continuous homomorphisms (or equivalently:
  {\bf continuous characters}) $\chi:G\to \T$, where $\T$ denotes the
  quotient group $\R/\Z$  (which is topologically isomorphic to the compact group of complex numbers of modulus $1$), forms an abelian group when addition is defined pointwise.
  It  is called {\bf dual group} or {\bf character group} of $G$ and denoted by $(G,\tau)^\wedge$ or simply $G^\wedge$ if the topology is clear from the
  context.

  Let  $H$ be a subgroup  of the homomorphism group  ${\rm Hom}(G,\T)$.
  The coarsest group topology on $G$ with dual $H$ is the initial topology on $G$ induced by the mapping $G\to\T^H,\ x\mapsto(\chi(x))_{\chi\in H}$.
  This topology will be denoted by $\sigma(G,H)$.
  It is natural to ask
  whether there exists a finest group topology   on $G$  with character group  $ H$.
  As in the case of topological  vector spaces, this question is usually restricted to the class of locally quasi-convex groups,
  a notion introduced by Vilenkin in \cite{vilenkin}, which generalized the setting of local convexity in topological vector spaces.
   For subsets $A\subseteq G$ and $B\subseteq G^\wedge$  the set $A^\triangleright=\{\chi\in G^\wedge:\ \chi(A)\subseteq \T_+\}$ is called the {\bf polar} of $A$ and
  $B^\triangleleft=\{x\in G:\ \forall\;\chi\in B\ \ \ \chi(x)\in\T_+\}$ is called
 the {\bf prepolar} of $B$, where  $\T_+=[-\frac{1}{4} ,\frac{1}{4}]+\Z $.
A subset $A$ of an abelian topological group is called {\bf   quasi-convex} if $A=(A^\triangleright)^\triangleleft$ holds.
This means that for every $x\notin A$ there exists a continuous character $\chi\in A^\triangleright$ such that $\chi(x)\notin\T_+$;
this generalizes  the description of closed, convex, and  symmetric subsets in vector spaces given by the Hahn-Banach theorem.
 An abelian topological group is called {\bf locally quasi-convex} if it has a neighborhood basis at the neutral element $0$ consisting of quasi-convex sets. Since the class of locally quasi-convex groups is stable under taking arbitrary products and subgroups, the topology $\sigma(G,H)$ is
 locally quasi-convex.

 Given two locally quasi-convex group topologies $\lambda$ and $\tau$ on an abelian group $G$, then $\lambda$ is called {\bf compatible} with $ \tau $ if
 $(G,\tau)^\wedge=(G,\lambda)^\wedge$ holds.  Of course, being compatible defines an equivalence relation on the set of all
  locally quasi-convex group topologies on a given abelian group $G$. If $H=(G,\tau)^\wedge$, then the precompact topology $\sigma(G,H)$ is the coarsest among all group topologies
  compatible with $\tau$.
  In case there exists a finest group topology compatible with $\tau$, this topology is called
  {\bf Mackey topology} for $(G,\tau)$ or for the pair $(G,H)$ and it is denoted by $\mu_\tau$ or $\mu_H$ when $H=(G,\tau)^\wedge$.

In \cite{mackey} it was shown that if $(G,\tau)$ is a complete and metrizable locally quasi-convex group, then
  $\tau$ is the   Mackey topology.  It is not known whether every locally quasi-convex group  admits a Mackey topology.

As mentioned above, for a pair $(G,H)$, the precompact topology $\sigma(G,H)$ is the coarsest locally quasi-convex group topology with dual $H$.
So it seems unlikely that a precompact group topology is the Mackey topology, this means also the finest compatible topology. However, in
  \cite{BTM} an example is given for a
    non-complete metrizable precompact group topology
   which is Mackey. This was taken up in  \cite{tesislorenzo} where it was shown that for every $m\ge 2$ the group $(G_m,\tau_m)=(\bigoplus_{n\in\N} \Z/m\Z,\tau_m)$ endowed with the
   topology $\tau_m$ induced by the product is the unique locally quasi-convex group topology with dual $(G_m,\tau_m)^\wedge$.
   In particular,  $\tau_m$ is the Mackey topology.
   The topology $\tau_m$ is {\bf linear}, which means that  it has a neighborhood basis at $0$ consisting of
(necessarily open) subgroups. It is trivial to see that every linear group topology is locally quasi-convex.

Since in the vector space case every locally convex metrizable vector space topology is the Mackey topology, the following questions are natural:
\begin{enumerate}
\item Let $(G,\tau)$  be a metrizable  locally quasi-convex group. Is $\tau$ the Mackey topology?
\item Let $(G,\lambda)$ be a metrizable group and $\lambda$ a linear group topology. Is $\lambda$ the Mackey topology?
\item {\bf (Dikranjan)} Are the $p$-adic topologies on $\Z$    Mackey topologies?
\end{enumerate}

 Of course, the questions above are of decreasing power.

   The first question was answered in the negative in
  the recent preprint \cite{28diciembre}. It is shown there that for a compact, connected, metrizable abelian group $X$ the group
  of null-sequences $ {\rm c}_0(X)$ endowed with the topology of uniform convergence $\tau$ is complete, metrizable, and locally quasi-convex, hence Mackey. It is shown in the paper that  the dual group $H:=({\rm c}_0(X),\tau)^\wedge$ is countable. Hence the weak topology $\sigma=\sigma({\rm c}_0(X),H)$,
  is also metrizable, locally quasi-convex,  and strictly coarser than $\tau$ and compatible with $ \tau$. This shows that $({\rm c}_0(X),\sigma)$ is metrizable, but
  not the  Mackey topology.

In this article we show that every non-discrete  Hausdorff  linear topology on $\Z$ is not Mackey, in
  particular the $p$-adic topologies are not Mackey (Theorem \ref{linnotM}).
This gives a negative  answer to (3) and shows that (2) is not always true. (However, the example of the groups $G_m$ above show, that there do exist
metrizable linear topologies which are Mackey.)

\vspace{0.3cm}

Now we will sketch the basic idea how to construct a group topology   on $\Z$ which is compatible with a given linear topology $\lambda$ and
 strictly finer than   $\lambda$.

Given a non-discrete  Hausdorff  linear group topology on the integers $\Z$, there exists a strictly increasing sequence $(b_n)_{n\in\N_0}\in \N^{\N_0}$ such that
$(b_n\Z)_{n\in\N_0}$ is a neighborhood basis at $0$ and $b_0=1$, and $b_n|b_{n+1}$ (Proposition \ref{topologiaslinealesnosondiscretas}). Conversely, given such a sequence $\textbf{b}=(b_n)_{n\in\N_0}$,
the family $(b_n\Z)_{n\in\N_0}$ is a neighborhood basis of a linear group topology on $\Z$, which we will denote by $\lambda_{\textbf{b}}$. It is easy to prove  that the dual group $H_{\textbf{b}}:=(\Z,\lambda_{\textbf{b}})^\wedge$ of $(\Z,\lambda_{\textbf{b}})$ can be identified with $\{\frac{k}{b_n}+\Z:\ k\in\Z,\ n\in\N_0\}$ (Proposition \ref{duallineal}). Since $\lambda_{\textbf{b}}$ is precompact, we have $\lambda_{\textbf{b}}=\sigma(\Z,H_{\textbf{b}})$.
  For a  prime $p$ and $(b_n)_{n\in\N_0}=(p^n)_{n\in\N_0}$    we obtain the so-called {\bf  $p$-adic topology} on $\Z$, the dual of which is the Pr\"ufer group
$\Z(p^\infty)$.

Suppose for the moment that there exists a compatible group topology $\tau_{\textbf{b}}$ for $\lambda_{\textbf{b}}$ which is strictly finer
than $\lambda_{\textbf{b}}$.
All topologies under consideration are locally quasi-convex. In order to describe the topology, it is sufficient to consider {\bf equicontinuous subsets} in the dual: these are subsets of polars of $0$-neighborhoods. Indeed, if $U$ is a quasi-convex neighborhood of $0$ then
$U=(U^\triangleright)^\triangleleft$ holds and $U^\triangleright$ is equicontinuous.
So let us first have a look on the dual homomorphism of the continuous identity mapping $\iota:(\Z,\tau_{\textbf{b}})\to
(\Z,\lambda_{\textbf{b}})$
 $$\iota^\wedge:
(\Z,\lambda_{\textbf{b}})^\wedge \longrightarrow  (\Z,\tau_{\textbf{b}})^\wedge.
 $$
The dual homomorphisms $\iota^\wedge$ (which is the identity map) is continuous. We endow the character groups with the compact-open topology.
It is a standard fact that every polar of a neighborhood of $0$ is a compact subset in the dual  group (e.g. (3.5) in \cite{tesislydia})
and it is well-known that
  the equicontinuous subsets in the dual of a precompact group, in particular in the dual of  $(\Z,\lambda_{\textbf{b}}) $, are
finite ((2.4) in \cite{28diciembre}).

 Hence, in order to obtain a  compatible topology $\tau_{\textbf{b}}$ which is strictly finer than $\lambda_{\textbf{b}}$,
the dual group $(\Z,\tau_{\textbf{b}})^\wedge$ must contain infinite equicontinuous, and hence compact, quasi-convex subsets.\\
Since $\phi:(\Z,\tau_{\textbf{b}})^\wedge\to (\T,\sigma(\T,\Z)) ,\ \chi\mapsto\chi(1)$ is continuous, for any neighborhood $U$ at $0$ in
$(\Z,\tau_{\textbf{b}})$ the sets $\phi(U^\triangleright)$ are also compact in the compact group
 $\T $ (observe, that $\sigma(\T,\Z)$ is the usual topology on $\T$). Since a homomorphism $(\Z,\tau_{\textbf{b}})\to\T$ which maps a neighborhood $U$ of $0$ in
 $(\Z,\tau_{\textbf{b}})$ in $\T_+$ is continuous, it is straight forward to show that $\phi(U^\triangleright)$ is also quasi-convex in $\T$.

In \cite{tesislorenzo}, \cite{DP}, \cite{DG}, and \cite{DG2} infinite quasi-convex null-sequences in (some) LCA groups have been established.
A typical candidate for a compatible topology could be the topology of uniform convergence on such a   null-sequence in $\T$.
(For more details, see the comment at the beginning of section 3.)

\section{Linear topologies on $\Z$}
In this section we recall some results about linear topologies on $\Z$. They give in a canonical way rise to
sequences, which we shall call $D$-sequences.

First we recall a characterization of linear topologies on $\Z$, since this will motivate the definition of these $D$-sequences.

\begin{proposition}\label{topologiaslinealesnosondiscretas}

The following statements are equivalent:

\begin{enumerate}

\item[(1)] $\lambda$ is a non-discrete Hausdorff linear group topology on $\Z$.

\item[(2)] There exists a sequence $\textbf{b}=(b_n)_{n\in\N_0}$ in $\Z$ with $b_0=1$, $b_n\neq b_{n+1}$ and $b_n\mid b_{n+1}$ for all $n\in\N_0$ such that $\{b_n\Z\mid n\in\N_0\}$ is a neighborhood basis at $0$ in  $(\Z,\lambda)$.

\end{enumerate}

\end{proposition}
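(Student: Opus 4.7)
My plan is to prove the two implications separately. The direction $(2)\Rightarrow(1)$ reduces to straightforward verification: divisibility $b_n\mid b_{n+1}$ gives $b_{n+1}\Z\subseteq b_n\Z$, so $\{b_n\Z\}_{n\in\N_0}$ is a descending chain of subgroups and hence a filter basis of subgroups, defining a (necessarily linear) group topology. The conditions $b_n\mid b_{n+1}$ and $b_n\ne b_{n+1}$ force $|b_{n+1}|\ge 2|b_n|$, so $|b_n|\to\infty$; this yields $\bigcap_n b_n\Z=\{0\}$ (so the topology is Hausdorff) and ensures that each $b_n\Z$ is a proper, infinite subgroup (so the topology is non-discrete).

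The substantive direction is $(1)\Rightarrow(2)$. My starting point is that every subgroup of $\Z$ is of the form $c\Z$ for a unique $c\in\N_0$, so the collection of open subgroups of $(\Z,\lambda)$ is countable; enumerate it as $c_1\Z,c_2\Z,\ldots$ (with each $c_i\in\N$, since non-discreteness rules out $c_i=0$). I would then form the running lcms $d_n:=\mathrm{lcm}(c_1,\ldots,c_n)$, so that $d_n\Z=\bigcap_{i\le n}c_i\Z$ is again an open subgroup, $d_n\mid d_{n+1}$, and $(d_n\Z)_n$ is automatically a neighborhood basis at $0$, since it refines every $c_i\Z$.

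The main obstacle is that the $d_n$ need not be strictly increasing, whereas the statement requires $b_n\ne b_{n+1}$. To handle this I would argue that $(d_n)$ takes infinitely many distinct values: in a linear topological group the closure of $\{0\}$ equals the intersection of all open subgroups, so by Hausdorffness $\bigcap_i c_i\Z=\{0\}$; if $(d_n)$ stabilized at some $d_\infty\in\N$, then $d_\infty\Z=\bigcap_n d_n\Z=\bigcap_i c_i\Z=\{0\}$, forcing $d_\infty=0$, a contradiction. Hence I can extract the subsequence of distinct values of $(d_n)$, prepend $b_0=1$, and check that the resulting sequence $\textbf{b}=(b_n)_{n\in\N_0}$ satisfies all three conditions in (2) and still yields a neighborhood basis of $\lambda$ at $0$.
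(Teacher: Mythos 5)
Your proof is correct and follows essentially the same route as the paper: both directions rest on the facts that every open subgroup of $\Z$ is $c\Z$ with $c\in\N$ and that the basis can be rearranged into a strictly decreasing (hence divisibility) chain. The paper compresses your lcm/non-stabilization argument into a single ``without loss of generality,'' so your version simply supplies the details (running lcms, and Hausdorffness forcing infinitely many distinct values) that the paper leaves implicit.
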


\begin{proof}
\noindent  $(1)\Rightarrow (2)$
Let  $\mathcal{U}=(U_i)_{ i\in I} $ be  a neighborhood basis at $0$  consisting of subgroups.
Hence  $ \{U_i :\ {i\in I}\}=\{b_n\Z:\ {n\in\N_0}\}$ for suitable $b_n\in\N$.
Without loss of generality  we may
assume that $b_0=1$    and that $b_n\Z\supsetneq b_{n+1}\Z$ for all $n\in\N$, which is equivalent to $b_n|b_{n+1}$ and $b_n\not=b_{n+1}$.

\noindent$(2)\Rightarrow (1)$ follows easily from  proposition 1 , chapter III, $\S$ 1.2 in \cite{bourbaki}.
The induced topology is Hausdorff, since  $x\in b_n\Z$ for all $n\in\N$ iff $b_n$ divides $ x$ for all $n\in\N$ iff $x=0$.
\end{proof}

\begin{remark}
\begin{enumerate}
\item[(1)]
Under the conditions of the above proposition, the topology $\lambda$ is metrizable by the
  Birkhoff-Kakutani theorem (see e.g. 3.3.12 in \cite{teoremaBK}).
\item[(2)] Given a sequence $\textbf{b}=(b_n)_{n\in\N_0}$ as in Proposition  \ref{topologiaslinealesnosondiscretas} (2), we denote
the associated linear topology by $\lambda_{\textbf{b}}$.
\item[(3)] For $(b_n)_{n\in\N_0}=(p^n)_{n\in\N_0}$ the topology $\lambda_{\textbf{b}}$ is the {\bf  $p$-adic topology}.

\end{enumerate}
\end{remark}

 The representation in  Proposition \ref{topologiaslinealesnosondiscretas} (2) motivates the following definition.

\begin{definition}\label{definicionDsequence}
  Let \textbf{b}$=(b_n)_{n\in\N_0}$ be a sequence of natural numbers. We say that \textbf{b} is a {\bf  $D$-sequence} if $b_0=1$ and $b_n\mid b_{n+1}$ and $b_n\neq b_{n+1}$ for all $n\in\N_0$. We will write $\mathcal{D}:=\{\textbf{b}\mid \textbf{b}$ is a $D$-sequence$\}$ and $\mathcal{D}_\infty:=\{\textbf{b}\mid\textbf{b}$ is a $D$-sequence and $\frac{b_{n+1}}{b_n}\rightarrow \infty\}$.

\end{definition}

\begin{example}

\begin{enumerate}
\item[(1)]  $\left( a^n\right)_{n\in\N_0}\in \mathcal{D}\setminus  \mathcal{D}_\infty$ for every $a\in\N\setminus\{1\}$.
\item[(2)] $ \left( (n+1)!\right)_{n\in\N_0}\in \mathcal{D}_\infty$.
\item[(3)] $ \left( a^{n^2}\right)_{n\in\N_0}\in \mathcal{D}_\infty$ for every    natural number $a>1$.
\end{enumerate}

\end{example}

\begin{remark}\label{acotaciondsequence}\label{lema20110301}
\begin{enumerate}
\item[(1)]
  Obviously, every subsequence of a $D$-sequence \textbf{b} which contains $b_0$ is a $D$-sequence.
\item[(2)]
  Let \textbf{b} be a $D$-sequence. Inductively, since $\frac{b_{n+1}}{b_n}\geq 2$, we have  $b_{n+k}\geq 2^kb_n$
  for all $n,k\in\N_0.$
 \item[(3)] For a $D$-sequence $\textbf{b}$ we have $\dis\sum_{n\ge j}\frac{1}{b_n}\leq\frac{2}{b_{j }}\quad\quad \forall\;j\in\N_0.$
   \end{enumerate}
 \noindent[Only (3) needs to be shown.
  We have $\dis \sum_{n\ge j}\frac{1}{b_n}=\sum_{k=0}^\infty \frac{1}{b_{j+ k}}\stackrel{(2)}{\leq}\sum_{k=0}^\infty\frac{1}{b_{j }}\frac{1}{2^k}=
\frac{1}{b_{j }}\sum_{k=0}^\infty\frac{1}{2^k}=\frac{2}{b_{j }}$ for all $j\in\N_0$.]
 \end{remark}

Since every non-trivial subgroup of $\Z$ has finite index, we have

\begin{proposition}\label{precompact}
  Let \textbf{b} be a $D$-sequence. Then $(\Z,\lambda_\textbf{b})$ is precompact.
\end{proposition}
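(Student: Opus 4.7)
The plan is to invoke the standard definition: an abelian topological group $(G,\tau)$ is precompact if and only if for every neighborhood $U$ of $0$ there exists a finite subset $F \subseteq G$ with $G = F + U$. By Proposition \ref{topologiaslinealesnosondiscretas}, the family $\{b_n\Z : n \in \N_0\}$ forms a neighborhood basis at $0$ for $\lambda_{\textbf{b}}$, so it suffices to check the condition on these basic neighborhoods.

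First I would fix $n \in \N_0$ and observe that $b_n\Z$ is a subgroup of $\Z$ of index $b_n$ (which is finite because $b_n \in \N$). Concretely, the set $F := \{0, 1, 2, \dots, b_n - 1\}$ is a complete set of coset representatives, so $\Z = F + b_n\Z$. Since $F$ is finite and the argument applies to every basic neighborhood, it applies (by passing to a sub-neighborhood) to every neighborhood of $0$, and therefore $(\Z, \lambda_{\textbf{b}})$ is precompact.

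There is no real obstacle here: the hint preceding the proposition (every non-trivial subgroup of $\Z$ has finite index) is exactly the statement being exploited, and the rest is an immediate unpacking of the definition of precompactness applied to the linear neighborhood basis.
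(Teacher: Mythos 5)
Your proof is correct and follows exactly the route the paper intends: the paper gives no formal proof, only the preceding remark that every non-trivial subgroup of $\Z$ has finite index, and your argument (finitely many cosets of each basic neighborhood $b_n\Z$ cover $\Z$) is precisely the unpacking of that remark.
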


 Recall, that every precompact topology is the weak topology induced by its dual.

\begin{lemma} \label{igualdadlineales1}
  Let \textbf{b} be a $D$-sequence and let $\textbf{c}\in \mathcal{D}$  be a subsequence. Then $\lambda_\textbf{b}=\lambda_\textbf{c}$.
\end{lemma}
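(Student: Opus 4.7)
The plan is to show both topologies have nested neighborhood bases that refine each other, so they coincide.

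First, I write the subsequence as $c_n = b_{k_n}$ for a strictly increasing sequence $(k_n)_{n\in\N_0}$ of nonnegative integers, with $k_0 = 0$ (since the $D$-sequence condition forces $c_0 = 1 = b_0$, and $1$ occurs only at index $0$ in $\textbf{b}$ because $b_n \neq b_{n+1}$ together with $b_n \mid b_{n+1}$ forces the sequence to be strictly increasing). In particular $k_n \geq n$ for all $n$, and $k_n \to \infty$.

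For the inclusion $\lambda_{\textbf{c}} \subseteq \lambda_{\textbf{b}}$: every basic $\lambda_{\textbf{c}}$-neighborhood of $0$ has the form $c_n\Z = b_{k_n}\Z$, which is itself a basic $\lambda_{\textbf{b}}$-neighborhood. So the $\lambda_{\textbf{c}}$-basis at $0$ is contained in the $\lambda_{\textbf{b}}$-basis at $0$, whence $\lambda_{\textbf{c}}$ is coarser than $\lambda_{\textbf{b}}$.

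For the reverse inclusion $\lambda_{\textbf{b}} \subseteq \lambda_{\textbf{c}}$: given a basic $\lambda_{\textbf{b}}$-neighborhood $b_n\Z$, since $k_m \to \infty$ pick any $m$ with $k_m \geq n$. Then iterating the divisibility $b_j \mid b_{j+1}$ yields $b_n \mid b_{k_m} = c_m$, so $c_m\Z \subseteq b_n\Z$. Hence every $\lambda_{\textbf{b}}$-neighborhood of $0$ contains a $\lambda_{\textbf{c}}$-neighborhood of $0$, and by translation invariance this suffices to conclude that $\lambda_{\textbf{b}}$ is coarser than $\lambda_{\textbf{c}}$.

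There is no serious obstacle here; the proof is a direct unwinding of the divisibility structure of $D$-sequences, and the only point that requires a moment's attention is verifying that $c_0 = b_0 = 1$ forces $k_0 = 0$, which is where the strict monotonicity of $\textbf{b}$ (guaranteed by $b_n \mid b_{n+1}$ and $b_n \neq b_{n+1}$) is used.
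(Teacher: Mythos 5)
Your proof is correct and follows essentially the same route as the paper's: one direction is the observation that the $\lambda_{\textbf{c}}$-basic neighborhoods form a subfamily of the $\lambda_{\textbf{b}}$-basic ones, and the other uses the divisibility chain to find, inside any $b_n\Z$, some $c_m\Z$. The only difference is that you spell out the indexing $c_n=b_{k_n}$ and the fact that $k_0=0$ more explicitly than the paper does, which is harmless.
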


\begin{proof}
  Since $\textbf{c}$ is a subsequence of $ \textbf{b}$ we have that $\{c_n\Z\mid n\in\N_0\}\subseteq\{b_n\Z\mid n\in\N_0\}$ and hence $\lambda_\textbf{c}\leq\lambda_\textbf{b}$. Conversely, for $n\in\N$, there exist  $m\in\N$ and $n_m\in\N$ such that $c_m=b_{n_m}>b_n$ and hence   $b_n\mid b_{n_m}=c_m$, which implies $c_m\Z\subseteq b_n\Z$. This shows that $\lambda_\textbf{c}\ge\lambda_\textbf{b}$.
\end{proof}

\begin{proposition}\label{igualdadlineales}
For every $\textbf{b}\in\mathcal{D}$  there exists $\textbf{c}\in\mathcal{D}_\infty$ such that $\lambda_\textbf{b}=\lambda_\textbf{c}$.

\end{proposition}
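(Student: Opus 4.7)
The plan is to extract, from the given $D$-sequence $\textbf{b}$, a subsequence $\textbf{c}$ that lies in $\mathcal{D}_\infty$; then Lemma \ref{igualdadlineales1} immediately yields $\lambda_\textbf{b}=\lambda_\textbf{c}$.

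I would construct $\textbf{c}=(c_k)_{k\in\N_0}$ recursively. Set $c_0:=b_0=1$ and $n_0:=0$. Assuming $n_k$ has been chosen, use Remark \ref{acotaciondsequence}(2), which gives $b_{n_k+j}\ge 2^{j}b_{n_k}$, to pick $n_{k+1}>n_k$ large enough that
\[
\frac{b_{n_{k+1}}}{b_{n_k}}\ge k+1,
\]
and set $c_{k+1}:=b_{n_{k+1}}$. Such an $n_{k+1}$ exists because $2^j\to\infty$, so in fact any $n_{k+1}\ge n_k+\lceil\log_2(k+1)\rceil$ works.

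Then $\textbf{c}$ is a subsequence of $\textbf{b}$ containing $b_0$, so by Remark \ref{acotaciondsequence}(1) it is a $D$-sequence. By construction $\frac{c_{k+1}}{c_k}\ge k+1\to\infty$, hence $\textbf{c}\in\mathcal{D}_\infty$. Finally, Lemma \ref{igualdadlineales1} gives $\lambda_\textbf{b}=\lambda_\textbf{c}$.

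There is no real obstacle here: the only ingredient needed beyond the already-stated results is the elementary observation that the terms of a $D$-sequence grow at least geometrically (Remark \ref{acotaciondsequence}(2)), which ensures we can always find a next index making the ratio as large as we please. The proof is essentially a diagonal-style thinning of the sequence.
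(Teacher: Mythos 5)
Your proof is correct and follows essentially the same route as the paper: both recursively thin $\textbf{b}$ to a subsequence whose consecutive ratios tend to infinity (the paper forces the ratios to be strictly increasing integers, you force the $k$-th ratio to be at least $k+1$) and then invoke Lemma \ref{igualdadlineales1}.
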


\begin{proof}
 We shall define recursively  a suitable subsequence \textbf{c}.  Fix $c_0=b_0=1$ and $c_1=b_1$.  Given $c_0,\dots,c_n$ we define $c_{n+1}$ in the following way: For $i$ such that $c_n=b_i$ there exists $j\in\N_0$ such that $\frac{b_j}{b_i}>\frac{c_{n }}{c_{n-1 }}$; or equivalently, $\frac{b_j}{c_n}>\frac{c_{n}}{c_{n-1}}$. Define $c_{n+1}:=b_j$. This implies that $\frac{c_{n+1}}{c_{n }}>\frac{c_n}{c_{n-1}}$. Since $\frac{c_n}{c_{n-1}}\in\N$, we have $\frac{c_{n+1}}{c_{n}}\rightarrow \infty$, which is equivalent to $\frac{c_{n}}{c_{n+1}}\rightarrow 0.$ Hence $\textbf{c}\in\mathcal{D}_\infty$.  By Lemma \ref{igualdadlineales1}, we have $\lambda_\textbf{b}=\lambda_\textbf{c}$.
\end{proof}

For the sake of completeness, we recall here the structure of the dual of $(\Z,\lambda_{\textbf{b}})$. Therefore, we introduce the
following notation:

\begin{notation}
For every $\textbf{b}\in\mathcal{D}$ and every $n\in\N_0$, we define $$\xi^{\,\textbf{b}}_n:\Z\to \T,\ k\mapsto \frac{k}{b_n}+\Z .$$ If no confusion can arise,
we simply write $\xi_n$ instead of $\xi_n^{\,\textbf{b}}$.
Of course, $\langle\{\xi_m^{\,\textbf{b}}:\ m\in\N\}\rangle=\bigcup_{m\in\N}\langle\xi_m^{\,\textbf{b}}\rangle $ holds.
\end{notation}

\begin{proposition}\label{duallineal}
For $\textbf{b}=(b_n)\in\mathcal{D}$, $\quad $
 $(\Z,\lambda_\textbf{b})^\wedge=\langle \{\xi_n^{\,\textbf{b}}:\ n\in\N\}\rangle$ holds.
 In particular, $\lambda_{\textbf{b}}$ is the weak topology, that is the coarsest topology which makes all characters $\chi\in (\Z,\lambda_{\textbf{b}})^\wedge$ continuous.
 Furthermore, every equicontinuous
 subset of $(\Z,\lambda_\textbf{b})^\wedge$ is finite.
\end{proposition}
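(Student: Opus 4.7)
The plan is to handle the three assertions in sequence, using the standard identification $\Hom(\Z,\T)\to \T$, $\chi\mapsto\chi(1)$, which makes every abstract character of $\Z$ correspond to a unique element of $\T$.

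For the first assertion, I would characterize the continuous characters. Since $\{b_n\Z:n\in\N_0\}$ is a neighborhood basis of $0$ consisting of subgroups, a character $\chi$ is $\lambda_\textbf{b}$-continuous iff there exists $n$ such that $\chi(b_n\Z)\subseteq\T_+$. Since $\chi(b_n\Z)$ is a subgroup of $\T$ and the only subgroup of $\T$ contained in $\T_+$ is $\{0\}$ (by the no-small-subgroups property of $\T$), continuity is equivalent to $\chi(b_n\Z)=\{0\}$, i.e. $b_n\chi(1)=0$ in $\T$. Hence $\chi(1)\in\{k/b_n+\Z:k\in\Z\}$, which means $\chi=k\,\xi_n^{\,\textbf{b}}$ for some $k\in\Z$. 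Conversely each $k\,\xi_n^{\,\textbf{b}}$ is clearly continuous since it vanishes on $b_n\Z$. Thus $(\Z,\lambda_\textbf{b})^\wedge=\bigcup_{n\in\N_0}\langle\xi_n^{\,\textbf{b}}\rangle=\langle\{\xi_n^{\,\textbf{b}}:n\in\N\}\rangle$, as asserted (note $\xi_0^{\,\textbf{b}}$ is the trivial character).

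The ``in particular'' statement is immediate from Proposition \ref{precompact} and the remark recorded just before Lemma \ref{igualdadlineales1}: a precompact group topology coincides with the weak topology induced by its dual.

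For the last assertion, every equicontinuous subset of $(\Z,\lambda_\textbf{b})^\wedge$ is by definition contained in some polar $U^\triangleright$, where $U$ is a $0$-neighborhood; passing to a basic neighborhood we may assume $U=b_n\Z$ for some $n$. The polar $(b_n\Z)^\triangleright$ consists of those continuous characters $\chi$ with $\chi(b_n\Z)\subseteq\T_+$, hence (again because $\T$ has no small subgroups) with $b_n\chi(1)=0$. Thus $(b_n\Z)^\triangleright$ corresponds bijectively to the $b_n$-torsion subgroup of $\T$ and therefore has exactly $b_n$ elements, in particular it is finite.

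The only subtle point, and the one I would take most care to justify, is the reduction of continuity of $\chi:\Z\to\T$ to the condition $\chi(b_n\Z)=\{0\}$; everything else is a direct calculation once this step is in place.
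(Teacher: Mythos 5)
Your proposal is correct and follows essentially the same route as the paper's proof: both reduce continuity of $\chi$ to the condition $\chi(b_n\Z)\subseteq\T_+$ for some $n$, use that a subgroup of $\T$ contained in $\T_+$ is trivial to conclude $\chi(1)$ is $b_n$-torsion, and deduce finiteness of equicontinuous sets from $(b_n\Z)^\triangleright\subseteq\langle\xi_n^{\,\textbf{b}}\rangle$. Your observation that the polar has exactly $b_n$ elements is a minor sharpening, but the argument is the same.
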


\begin{proof}
Fix $n\in\N$ and $\chi\in (b_n\Z)^\triangleright$. Since $\chi(b_n\Z)$ is a subgroup contained in $\T_+$, this  yields $\chi(b_n\Z)=\{0+\Z\}$. For $x\in\R$ such that $\chi(1)=x+\Z$, this implies $ b_nx+\Z=\chi(b_n)=0+\Z$ and
hence
 there exists $k\in\Z$ such that $x=\frac{k}{b_n}$. So $(b_n\Z)^\triangleright\subseteq \langle\xi_n^{\,\textbf{b}}\rangle$.
 In particular, this set is finite.

Now we show that $(\Z,\lambda_\textbf{b})^\wedge\subseteq\langle \{\xi_n^{\,\textbf{b}}:\ n\in\N\}\rangle$ holds.
Fix $\chi\in(\Z,\lambda_\textbf{b})^\wedge$. Since $\chi$ is continuous,
there exists a neighborhood $b_n\Z$ such that $\chi(b_n\Z)\subseteq\T_+$.
As shown above, this implies that $\chi \in  \langle\xi_n^{\,\textbf{b}}\rangle\subseteq \langle \{\xi_m^{\,\textbf{b}}:\ m\in\N\}\rangle$.

 Conversely, fix $ n\in\N_0$ and $k\in\Z$. The  kernel of the homomorphism $\chi=k\xi_n^{\,\textbf{b}}:\Z\rightarrow\T$, $l\mapsto \frac{kl}{b_n}+\Z$ contains the neighborhood $b_n\Z$ and therefore, $\chi$ is continuous. Since $k$ and  $n$ were arbitrary, the assertion follows.
 By Proposition \ref{precompact}, $(\Z,\lambda_{\textbf{b}})$ is precompact. This implies that $\lambda_{\textbf{b}}$ is the weak topology induced by
 $\langle \{\xi_n^{\,\textbf{b}}:\ n\in\N\}\rangle$.
\end{proof}

\begin{proposition}\label{convlambda}\label{bnconvergeenlandab}
Let $(b_n)_{n\in\N_0}$ be a $D$-sequence and consider  a sequences of integers
   $(l_j)_{j\in\N}$. Then the following conditions are equivalent:

\begin{enumerate}
\item[(1)] $l_j\rightarrow 0$ in $\lambda_\textbf{b}$.
\item[(2)] For every $n\in\N$,\ $\xi_n^{\,\textbf{b}}(l_j)\rightarrow 0+\Z$.
\item[(3)]   For every $n\in\N$ there exists $j_n$ such that $b_n\mid l_j$ for all $j\geq j_n$.
\end{enumerate}
In particular,  $(b_n)$ converges to $0$ in $\lambda_\textbf{b}$.
\end{proposition}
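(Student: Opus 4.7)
The plan is to prove the implications in a cycle $(1) \Rightarrow (3) \Rightarrow (2) \Rightarrow (1)$, exploiting at each step the very explicit description of the topology and its dual established in Proposition \ref{duallineal}.

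First I would observe that $(1) \Leftrightarrow (3)$ is essentially a translation of the definition of $\lambda_{\textbf{b}}$. Since $\{b_n\Z : n \in \N_0\}$ is a neighborhood basis at $0$, the statement ``$l_j \to 0$ in $\lambda_{\textbf{b}}$'' just means that for each $n$ the sequence $(l_j)$ eventually lies in $b_n\Z$, i.e.\ $b_n \mid l_j$ for all sufficiently large $j$. This gives both directions at once.

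Next I would handle $(3) \Rightarrow (2)$: if $b_n \mid l_j$ for all $j \geq j_n$, then by definition $\xi_n^{\,\textbf{b}}(l_j) = \frac{l_j}{b_n} + \Z = 0 + \Z$ for all $j \geq j_n$, so a fortiori $\xi_n^{\,\textbf{b}}(l_j) \to 0 + \Z$ in $\T$. For $(2) \Rightarrow (3)$, the key point I would emphasize is that $\xi_n^{\,\textbf{b}}(\Z) = \langle \tfrac{1}{b_n} + \Z\rangle$ is a finite (hence discrete) subgroup of $\T$, so $\{0+\Z\}$ is open in the image. Thus convergence of $\xi_n^{\,\textbf{b}}(l_j)$ to $0 + \Z$ in $\T$ forces $\xi_n^{\,\textbf{b}}(l_j) = 0 + \Z$ for all $j$ beyond some threshold, which is exactly $b_n \mid l_j$.

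Finally, for the ``in particular'' clause I would simply verify condition (3) for the sequence $(b_j)_{j\in\N_0}$. Fix $n \in \N$; by the $D$-sequence divisibility property we have $b_n \mid b_{n+1} \mid b_{n+2} \mid \cdots$, so $b_n \mid b_j$ for every $j \geq n$. Taking $j_n := n$ does the job.

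I do not expect any serious obstacle; the only place that requires a touch of care is the step $(2) \Rightarrow (3)$, where one must notice that convergence in $\T$ along a sequence that takes values in a finite subgroup must be eventually constant equal to the limit. Everything else is bookkeeping directly from Proposition \ref{duallineal} and Definition \ref{definicionDsequence}.
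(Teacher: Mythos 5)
Your proof is correct. The only real difference from the paper lies in which edge of the triangle of equivalences you argue directly: the paper proves $(1)\Leftrightarrow(3)$ by unwinding the neighborhood basis (exactly as you do) and then gets $(1)\Leftrightarrow(2)$ in one line by citing Proposition \ref{duallineal}, i.e.\ the fact that $\lambda_{\textbf{b}}$ is the weak topology induced by its dual, so that convergence is tested pointwise on the characters $\xi_n^{\,\textbf{b}}$. You instead connect $(2)$ to $(3)$ directly, and your key observation --- that $\xi_n^{\,\textbf{b}}$ takes values in the finite, hence discrete, subgroup $\langle \frac{1}{b_n}+\Z\rangle$ of $\T$, so convergence to $0+\Z$ forces eventual equality --- is a clean elementary substitute for the weak-topology argument. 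Your route is slightly more self-contained (it needs only the definition of $\xi_n^{\,\textbf{b}}$ rather than the duality statement), while the paper's is shorter given the machinery already in place; your verification of the ``in particular'' clause via $(3)$ with $j_n=n$ matches the paper's intent exactly.
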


\begin{proof}
(1) $\Longleftrightarrow$ (2) holds because of Proposition \ref{duallineal}.

(1) $\Longleftrightarrow$ (3):
  $(l_j)$ converges to $0$ in $\lambda_\textbf{b}$ iff for every $n\in\N$ there exists $j_n$ such that $l_j\in b_n\Z$ for all $j\geq j_n$. This condition is equivalent to $b_n\mid l_j$ for all $j\geq j_n$.

  The additional assertion follows immediately, since $\textbf{b}$ is a $\mathcal{D}$-sequence.
\end{proof}

 \begin{definition}[Protasov, Zenlenyuk; Barbieri, Dikranjan, Milan, Weber;\cite{protasov}, \cite{BDMW}]

 Let G be a an abelian group. We say that a sequence \textbf{u}$=(u_n)\in G^{\N}$ is a \textbf{$T$-sequence} if there exists a Hausdorff group topology on $G$ in which $(u_n)$ converges to zero. We  denote by $\mathcal{T}_G$ the set of all $T$-sequences in $G$.
 If there exists a precompact Hausdorff group topology on $G$ in which $(u_n)$ converges to $0$, the sequence is
 called a \textbf{$TB$-sequence}. The set of all {$TB$-sequences} in $G$  is denoted by $\mathcal{TB}_G$.
 \end{definition}

As a consequence of Propositions  \ref{precompact} and \ref{bnconvergeenlandab}, we obtain
 $\mathcal{D}_\infty\subseteq\mathcal{D}\subseteq\mathcal{TB}_\Z\subseteq \mathcal{T }_\Z.$

\section{Constructing topologies of uniform convergence associated with a $D$-sequence}
The aim of this section is to construct a locally quasi-convex group topology which is strictly
finer than $\lambda_{\textbf{b}}$ where $\textbf{b}$ is a $D$-sequence.

As explained in the introduction, it is necessary to find a compact and quasi-convex subset of  $\T$ contained in $H_{\textbf{b}}=(\Z,\lambda_{\textbf{b}})^\wedge$.

 A first attempt was made in \cite{tesislorenzo}, where several quasi-convex null-sequences in the Pr\"ufer-group
$\Z(2^\infty)\subseteq\T$ have been  presented. This was taken up in \cite{DP}, \cite{DG}, \cite{DG2} where quasi-convex sequences
in locally compact abelian groups have been studied.

Let $ S_{\textbf{b}}= \{\frac{1}{b_n}+\Z:\ n\in\N\} $ where $\textbf{b}\in \mathcal{D}_\infty$.
In \cite{tesislorenzo}, the topology of uniform convergence   on a quasi-convex null sequence contained in $ \Z(2^\infty)$  was introduced and basic properties have been  established.
Here we consider the topology $\tau_{\textbf{b}}$ of uniform convergence
on $ S_{\textbf{b}} $. The main theorem of this article (Theorem \ref{dualsicocientesacero}) asserts that if
 $\textbf{b}\in \mathcal{D}_\infty$, then $ (\Z,\tau_{\textbf{b}})^\wedge =\langle \{\frac{1}{b_n}+\Z:\ n\in\N\}\rangle=(\Z,\lambda_{\textbf{b}})^\wedge$, where $\langle S\rangle$ denotes the
subgroup generated by $S$. As a consequence, we obtain that every  non-discrete  Hausdorff  linear topology on $\Z$ is not Mackey.

 In \cite{DG} conditions for the set $ \pm S_{\textbf{b}}\cup\{0+\Z\}$ are given such that it is quasi-convex.
However, in order that $\tau_{\textbf{b}}$ be compatible with the weak topology $ \sigma(\Z,(\Z,\lambda_{\textbf{b}})^\wedge)$,
not only $ \pm S_{\textbf{b}}\cup\{0+\Z\}$, but also $   (S_{\textbf{b}}\stackrel{{\tiny\mbox{n summands}}}{+\ldots+}S_{\textbf{b}})^{\triangleleft\triangleright}$ must be  compact subsets of $ \T$  contained in $(\Z,\lambda_{\textbf{b}})^\wedge$ for all $n\in\N$.

Indeed, let $S_{\textbf{b}}\subseteq U^\triangleright$ for a suitable $0$-neighborhood $U$. For every $n\in\N$ there exists a quasi-convex neighborhood $W$ of $0$ such that $W\stackrel{{\tiny\mbox{n summands}}}{+\ldots+}W\subseteq U$ and hence $S_{\textbf{b}}\subseteq U^\triangleright\subseteq (W\stackrel{{\tiny\mbox{n summands}}}{+\ldots+}W)^\triangleright.$ It is straightforward to check that for $x\in W$ and $\chi\in S_{\textbf{b}}$ one has $\chi(x)\in[ -\frac{1}{4n},\frac{1}{4n}]+\Z$ and hence
 $S_{\textbf{b}}\stackrel{{\tiny\mbox{n summands}}}{+\ldots+}S_{\textbf{b}}\subseteq
W^\triangleright$. Since $W^\triangleright$ is quasi-convex, we obtain that the quasi-convex hull ${\rm qc}(S\stackrel{{\tiny\mbox{n summands}}}{+\ldots+}S)$ of $S_{\textbf{b}}\stackrel{{\tiny\mbox{n summands}}}{+\ldots+}S_{\textbf{b}}$ (the smallest quasi-convex set
containing $S_{\textbf{b}}$)   is contained in $W^\triangleright$.
 However, up to now, there is no algorithm  known   to  construct such compact sets.

We start with recalling some properties of the topology of uniform convergence.

\begin{proposition}\label{Stopologiassontopologias}
 Let $G$ be a topological group, and $\emptyset\not=S\subseteq \Hom (G,\T)$. We denote by $\T_n=[-\frac{1}{4n},\frac{1}{4n}]+\Z$.
 (Note that $\T_1=\T_+$ holds.)
 The family
 $\mathcal{U}_S= \displaystyle(\bigcap_{\chi\in S}\chi^{-1}(\T_{n})\mid n\in\N)_{n\in\N}$ forms a neighborhood basis at $0$ for a locally quasi-convex group topology, which we denote by   $\tau_S$. It is called the \textbf{topology of uniform convergence on $S$}. This topology is Hausdorff iff $S$ separates the points of $G$ (i.e. for every $x\in G\setminus \{0\}$ there exists $\chi\in S$ with $\chi(x)\not=0+\Z$).
\end{proposition}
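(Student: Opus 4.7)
The plan is to verify directly that $\mathcal{U}_S$ satisfies the standard axioms for a neighborhood basis at $0$ of a group topology, then to check local quasi-convexity and the Hausdorff criterion by inspection. Write $U_n := \bigcap_{\chi\in S}\chi^{-1}(\T_n)$. Since $\chi(0)=0\in\T_n$ for every $n$, every $U_n$ contains $0$; since $\T_n=-\T_n$ and each $\chi$ is a homomorphism, every $U_n$ is symmetric; and since $m\ge n$ implies $\T_m\subseteq \T_n$, the family $(U_n)_{n\in\N}$ is downward directed by inclusion, hence a filter basis.

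The central computation is the additive compatibility $U_{2n}+U_{2n}\subseteq U_n$. Given $x,y\in U_{2n}$ and $\chi\in S$, pick representatives of $\chi(x)$ and $\chi(y)$ in $[-\tfrac{1}{8n},\tfrac{1}{8n}]$; their sum lies in $[-\tfrac{1}{4n},\tfrac{1}{4n}]$ and represents $\chi(x+y)$, so $\chi(x+y)\in\T_n$. Conjugation invariance is automatic because $\T$ is abelian: $\chi(gxg^{-1})=\chi(x)$ for every $g,x$ and every $\chi\in S$, whence $gU_ng^{-1}=U_n$. By the standard criterion for generating a group topology from a neighborhood filter basis at $0$ (e.g.\ Bourbaki, as already invoked in Proposition~\ref{topologiaslinealesnosondiscretas}), there is a unique translation-invariant group topology $\tau_S$ on $G$ having $\mathcal{U}_S$ as a neighborhood basis at $0$.

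For local quasi-convexity I claim each $U_n$ is quasi-convex with respect to $\tau_S$. First, every $\chi\in S$ is $\tau_S$-continuous because $\chi^{-1}(\T_n)\supseteq U_n$ is a $\tau_S$-neighborhood of $0$; the same containment shows that each $k\chi$ with $k=1,\ldots,n$ lies in $(G,\tau_S)^\wedge$. Using that $x\in\T_n$ if and only if $x,2x,\ldots,nx\in\T_+$, we obtain
\[
U_n \;=\; \bigl\{x\in G:\ (k\chi)(x)\in\T_+\ \text{for all } \chi\in S,\ k=1,\ldots,n\bigr\} \;=\; T_n^{\triangleleft},
\]
where $T_n=\{k\chi:\chi\in S,\ 1\le k\le n\}\subseteq (G,\tau_S)^\wedge$. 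A routine unwinding gives $B\subseteq (B^{\triangleleft})^{\triangleright}$ for every such $B$, and applying $(\cdot)^{\triangleleft}$ to this inclusion yields $((B^{\triangleleft})^{\triangleright})^{\triangleleft}\subseteq B^{\triangleleft}$; the reverse inclusion is tautological. Hence $U_n=T_n^{\triangleleft}$ is quasi-convex, and $\tau_S$ is locally quasi-convex.

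Finally, a group topology with neighborhood basis $(U_n)$ at $0$ is Hausdorff iff $\bigcap_n U_n=\{0\}$. Since $\bigcap_n\T_n=\{0+\Z\}$, we have $\bigcap_n U_n=\{x\in G:\chi(x)=0\ \forall\,\chi\in S\}$, which equals $\{0\}$ precisely when $S$ separates the points of $G$. I do not expect a genuine obstacle in this proof; the one place to tread carefully is the estimate $U_{2n}+U_{2n}\subseteq U_n$, where one must work with representatives in $\R/\Z$ rather than with a naive absolute value, but this is a short computation.
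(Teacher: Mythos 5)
Your proof is correct, and its skeleton coincides with the paper's: the same sets $U_n=\bigcap_{\chi\in S}\chi^{-1}(\T_n)$, the same containment $U_{2n}+U_{2n}\subseteq U_n$ derived from $\T_{2n}+\T_{2n}\subseteq\T_n$, the same appeal to the Bourbaki criterion, and the same reduction of the Hausdorff property to $\bigcap_n U_n=\{0\}$. The one place where you genuinely diverge is the quasi-convexity of $U_n$. The paper argues structurally: $\T_m$ is quasi-convex in $\T$, each $\chi\in S$ is $\tau_S$-continuous, and quasi-convexity is preserved under inverse images along continuous homomorphisms and under intersections. You instead identify $U_n$ outright as the prepolar $T_n^{\triangleleft}$ of the set $T_n=\{k\chi:\chi\in S,\ 1\le k\le n\}$ of continuous characters, using the arithmetic fact that $x\in\T_n$ iff $x,2x,\dots,nx\in\T_+$, and then invoke the tautology that prepolars are quasi-convex. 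The two routes are of essentially equal length; yours has the merit of making explicit the elementary fact about $\T_n$ versus multiples that underlies the paper's claim that ``the sets $\T_m$ are quasi-convex'' (and that fact does require the small verification that for $|t|>\frac{1}{4n}$ some multiple $kt$ with $k\le n$ escapes $\T_+$, which is true by the standard smallest-$k$ argument), while the paper's route is more modular and reusable. Both are complete proofs.
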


\begin{proof}
 Denote by $U_{n}:=\displaystyle\bigcap_{\chi\in S}\chi^{-1}(\T_{n})$. The family $(U_n)$ is a decreasing sequence
 of symmetric sets containing $0$. The inclusion $\T_{2n}+\T_{2n}\subseteq \T_n$ yields that $U_{2n}+U_{2n}\subseteq U_n$. This shows that
  $(U_n)$ forms a neighborhood basis at $0$ of a group topology (proposition 1 , chapter III, $\S$ 1.2 in\cite{bourbaki}).
  Furthermore,   $\tau_S$ is a Hausdorff topology iff $\bigcap_{n\in\N}U_n=\bigcap_{\chi\in S}\chi^{-1}(\{0\})=\{0\}$, or equivalently, if $S$ separates the points of $G$.

 Finally, it remains to be shown that the sets $U_n$ are quasi-convex. This is an easy consequence of
 the following facts:
 Intersections and inverse images under continuous homomorphisms of quasi-convex sets are quasi-convex (e.g. (6.2) in \cite{tesislydia}). It is straight forward to check that the sets $\T_m$ are quasi-convex. Hence the assertion follows.
\end{proof}

Now we consider a particular family of topologies of uniform convergence for the group  $\Z$.

\begin{notation}
  Given a D-sequence \textbf{b}$=(b_n)$, we denote the $S$-topology corresponding to   the set $\{\xi_n^{\,\textbf{b}}:\ n\in\N\}\subseteq\Z^\wedge$ (where
  $\xi_n^{\,\textbf{b}}:\Z\to \T,\ k\mapsto \frac{k}{b_n}+\Z$)  by   $\tau_{\textbf{b}}$. Further, we define
$$V_{\textbf{b},m}=\{k\in\Z: \  \frac{k}{b_n}+\Z\in\T_m\ \mbox{for all}\ n\in\N\}=\{k\in\Z: \  \xi_n^{\,\textbf{b}}(k)\in\T_m\ \mbox{for all}\ n\in\N\}.$$ Then, according to  Proposition  \ref{Stopologiassontopologias},  $(V_{\textbf{b},m})_{m\in\N}$ is  a neighborhood basis at $0$ for $(\Z,\tau_{\textbf{b}})$ consisting of
quasi-convex sets.
Note   that $V_{\textbf{b},1}=\{\xi_n^{\,\textbf{b}}:\ n\in\N\}^\triangleleft$.
\end{notation}

 \begin{remark}
 The topology $\tau_{\textbf{b}}$ is  Hausdorff: If $k$ is an integer belonging to  $ \bigcap_{m\in\N}V_{\textbf{b},m}$,  then   $\frac{k}{b_m}+ \Z=0+\Z$ for all $m\in\N$, hence
$b_m$ divides $k$ for all $m\in\N$ and so $k=0$.

By the Birkhoff-Kakutani theorem, $\tau_{\textbf{b}}$ is metrizable.

It is straight forward to see that the mapping $(\Z,\tau_{\textbf{b}})\to {\rm c}_0(\T),\ k\mapsto (\frac{k}{b_n}+\Z)$ is an embedding.
\end{remark}

\begin{remark} \label{tauSesdiscreta}
  Let \textbf{b} be a $D$-sequence.
  It can be shown that  $\left(\frac{b_{n+1}}{b_n}\right)$ is bounded if  $\tau_\textbf{b}$ is discrete.

  Indeed, suppose that $\frac{b_n}{b_{n+1}}>\frac{1}{ M}$ for a suitable natural number $M$.
For every $k\in\Z\setminus\{0\}$ there is a unique $n\in\N_0$ such that
  $$\begin{array}{ccccccc}
 & &\frac{b_n}{4}&\le &|k|&<&\frac{b_{n+1}}{4}\\[1ex]
  \Longrightarrow&\frac{1}{4M}<&\frac{b_n}{4b_{n+1}}&\le &\frac{|k|}{b_{n+1}}&<&\frac{1}{4}.
  \end{array}$$
This shows that $k\notin V_{\textbf{b},m}$ or equivalently that $V_{\textbf{b},m}=\{0\}$.
\end{remark}

\begin{proposition}\label{convergenciaentau_S}
  Let $(l_j)_{j\in\N}$ be a sequence of integers. Then the following conditions are equivalent:
\begin{enumerate}

\item[(1)] $l_j\rightarrow 0$ in $\tau_\textbf{b}$.

\item[(2)] $(\xi_n^{\,\textbf{b}}(l_j))_{j\in\N}$ converges uniformly in $n$ to $0+\Z\in\T$.

\item[(3)] For every $m\in\N$, there exists $j_m\in\N$ such that for all $n\in\N$ and all $j\geq j_m\quad \underbrace{\frac{l_j}{b_n}+\Z}_{=\xi_n^{\,\textbf{b}}(l_j)}\in\T_m$ holds.
\end{enumerate}

\end{proposition}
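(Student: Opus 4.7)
The proposition is essentially a direct translation between three formulations of the same convergence, so my plan is to show $(1)\Leftrightarrow(3)$ by unwinding the definition of the neighborhood basis, and then $(2)\Leftrightarrow(3)$ by recognizing that the sets $\T_m$ form a neighborhood basis at $0+\Z$ in $\T$.

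First, I would handle $(1)\Leftrightarrow(3)$. Recall that by the definition of $\tau_{\textbf{b}}$ (and Proposition \ref{Stopologiassontopologias}), the family $(V_{\textbf{b},m})_{m\in\N}$ is a neighborhood basis at $0$ in $(\Z,\tau_{\textbf{b}})$, where
$$V_{\textbf{b},m}=\{k\in\Z:\ \xi_n^{\,\textbf{b}}(k)\in\T_m\text{ for all }n\in\N\}.$$
Hence $l_j\to 0$ in $\tau_{\textbf{b}}$ means: for every $m\in\N$ there is $j_m\in\N$ with $l_j\in V_{\textbf{b},m}$ for all $j\ge j_m$. Expanding the membership condition in $V_{\textbf{b},m}$ gives exactly (3).

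Next I would prove $(2)\Leftrightarrow(3)$. Endow $\T$ with its usual metric, so that a neighborhood basis at $0+\Z$ is given precisely by the sets $\T_m=[-\tfrac{1}{4m},\tfrac{1}{4m}]+\Z$, $m\in\N$. By definition, uniform convergence in $n$ of $\xi_n^{\,\textbf{b}}(l_j)$ to $0+\Z$ means: for every neighborhood $W$ of $0+\Z$ in $\T$ there exists $j_W\in\N$ such that $\xi_n^{\,\textbf{b}}(l_j)\in W$ for every $n\in\N$ and every $j\ge j_W$. Since $(\T_m)_{m\in\N}$ is a neighborhood basis at $0+\Z$, this is equivalent to requiring the same condition for $W=\T_m$ for every $m\in\N$, which is precisely (3).

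I do not anticipate any real obstacle: all three conditions are essentially different ways to say that the characters $\xi_n^{\textbf{b}}$ eventually send $l_j$ inside a prescribed neighborhood of $0$ in $\T$, uniformly in $n$. The only small point to be careful about is that in (2) the uniformity must be in $n$ (the index of the character), not in $j$, and this is what makes the basis $(V_{\textbf{b},m})$ of $\tau_{\textbf{b}}$ encode exactly uniform convergence on the set $\{\xi_n^{\,\textbf{b}}:\ n\in\N\}$.
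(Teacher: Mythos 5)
Your proof is correct and follows essentially the same route as the paper's: both unwind the definition of the neighborhood basis $(V_{\textbf{b},m})_{m\in\N}$ to get the equivalence with (3), and identify uniform convergence in $n$ with membership in the sets $\T_m$, which form a neighborhood basis at $0+\Z$ in $\T$. Your version is slightly more explicit about the role of the $\T_m$ as a neighborhood basis, but there is no substantive difference.
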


\begin{proof}
The sequence
 $(l_j)$ converges to $0$ in $\tau_{\textbf{b}}$ iff for every $m\in\N$, there exists $j_m$ such that for all $j\geq j_m$ we have $l_j\in V_{\textbf{b},m}$. The last condition is equivalent to $\xi_n^{\,\textbf{b}}(l_j)\in \T_m$ for all $n\in\N$ and all $j\ge j_m$. This shows the equivalence between $(1)$ and $(2)$. Since $\xi_n^{\,\textbf{b}}(l_j)=
  \frac{l_j}{b_n}+\Z $,  also the equivalence with $(3)$ is clear.
\end{proof}

\begin{corollary}\label{bconvergeaceroentauS}
  For  $\textbf{b}\in \mathcal{D} $, the following statements are equivalent:
  \begin{enumerate}
  \item[(1)] $\textbf{b}\in \mathcal{D}_\infty .$
  \item[(2)]  $b_j\stackrel{\tau_\textbf{b}}{\rightarrow} 0$.
  \end{enumerate}
\end{corollary}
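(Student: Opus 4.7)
The plan is to invoke Proposition \ref{convergenciaentau_S} which reduces (2) to the following concrete statement: for every $m\in\N$ there exists $j_m\in\N$ such that $\frac{b_j}{b_n}+\Z\in\T_m$ for all $n\in\N$ and all $j\ge j_m$. For each fixed $j$ it will be convenient to split the range of $n$ into $n\le j$ and $n>j$. In the first range the condition is automatic: since $b_n\mid b_j$, we have $\frac{b_j}{b_n}\in\Z$, so $\frac{b_j}{b_n}+\Z=0+\Z\in\T_m$. Hence only the range $n>j$ is of interest, and there one needs to estimate the size of $\frac{b_j}{b_n}\in(0,1)$ inside $\T$.

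For (1) $\Rightarrow$ (2), assume $\textbf{b}\in\mathcal{D}_\infty$ and fix $m\in\N$. Choose $j_m$ so large that $\frac{b_{k+1}}{b_k}\ge 4m$ for every $k\ge j_m$. Then for $j\ge j_m$ and $n>j$, the ratio
\[
\frac{b_n}{b_j}=\prod_{k=j}^{n-1}\frac{b_{k+1}}{b_k}\ge 4m,
\]
so $0<\frac{b_j}{b_n}\le \frac{1}{4m}$ and thus $\frac{b_j}{b_n}+\Z\in\T_m$. Combined with the trivial case $n\le j$, this gives $b_j\to 0$ in $\tau_{\textbf{b}}$.

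For (2) $\Rightarrow$ (1), I would argue contrapositively. Suppose $\frac{b_{n+1}}{b_n}$ does not tend to $\infty$; choose $M\in\N$ and an infinite set of indices $n$ with $\frac{b_{n+1}}{b_n}\le M$, and pick $m\in\N$ with $4m>M$. Recall from Remark \ref{acotaciondsequence}(2) that $\frac{b_j}{b_{j+1}}\le\frac12$, so $\frac{b_j}{b_{j+1}}$ lies in $(0,\tfrac12]$, and for this value to belong to $\T_m=[-\tfrac{1}{4m},\tfrac{1}{4m}]+\Z$ one needs $\frac{b_j}{b_{j+1}}\le\tfrac{1}{4m}$. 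However, on our infinite set of indices $\frac{b_j}{b_{j+1}}\ge\tfrac{1}{M}>\tfrac{1}{4m}$, so taking $n=j+1$ violates the convergence criterion no matter how large $j_m$ is chosen. This contradicts (2).

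There is no real obstacle here; the only care needed is the bookkeeping separating the two ranges of $n$ and the use of Remark \ref{acotaciondsequence}(2) to keep $\frac{b_j}{b_{j+1}}$ away from $\tfrac12+\Z$ (which is in $\T_m$ only for small $m$ and could otherwise spoil the contrapositive argument).
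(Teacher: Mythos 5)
Your proof is correct and follows essentially the same route as the paper: both directions reduce to criterion (3) of Proposition \ref{convergenciaentau_S}, split the range into $n\le j$ (trivial) and $n>j$ (controlled by $\frac{b_j}{b_{j+1}}$), and use $\frac{b_j}{b_{j+1}}\le\frac12$ to see that $\frac{b_j}{b_{j+1}}+\Z\in\T_m$ is equivalent to $\frac{b_j}{b_{j+1}}\le\frac{1}{4m}$. Your phrasing of the reverse implication as a contrapositive is only a cosmetic difference from the paper's direct argument.
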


\begin{proof}

\noindent (1) $\Longrightarrow$ (2): Fix $m\in \N$. Since $\frac{b_{j}}{b_{j+1}}\rightarrow 0$ in $\R$, there exists $j_m$ such that $\frac{b_j}{b_{j+1}}\le\frac{1}{4m}$ for all $j\geq j_m$. Choose $j\geq j_m$.

\noindent  If $n\leq j\quad \quad \frac{b_j}{b_n}+\Z=0+\Z\in\T_m$.

\noindent If $n>j\quad \quad \left|\frac{b_j}{b_n}\right|\leq\frac{b_j}{b_{j+1}}\le\frac{1}{4m}$; which implies that $\frac{b_j}{b_n}+\Z\in\T_m$  for all $n\in\N$ and $j\geq j_m$.\\
Combining these facts, we conclude from Proposition \ref{convergenciaentau_S} that
 $(b_j)$ converges to $0$ in ${\tau_\textbf{b}} $.

 \noindent (2) $\Longrightarrow$ (1):
 Assume that $(b_j)$ converges to $0$ in  ${\tau_\textbf{b}} $. This implies that for given $m\in\N$ there exists $j_m\in\N$ such that
 $\frac{b_j}{b_n}+\Z\in\T_m$ holds for all $j\ge j_m$ and all $n\in\N$. In particular $\frac{b_j}{b_{j+1}}+\Z\in\T_m$ for all $j\ge j_m$.
 This is equivalent to $\frac{b_j}{b_{j+1}}\le \frac{1}{4m}$, so (1) follows.

\end{proof}

\begin{proposition}\label{remark3.3}
 For every $D$-sequence  \textbf{b}, the topology $\tau_{\textbf{b}}$ is strictly finer than $\lambda_{\textbf{b}}$. In particular, $\id:(\Z,\tau_{\textbf{b}})\to (\Z,\lambda_{\textbf{b}})$ is a continuous isomorphism.
\end{proposition}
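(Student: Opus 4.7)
My plan is to establish the two inequalities $\lambda_{\textbf{b}}\le\tau_{\textbf{b}}$ and $\lambda_{\textbf{b}}\neq\tau_{\textbf{b}}$ separately. The ``in particular'' statement then follows at once, since $\id$ is a bijective group homomorphism and $\lambda_{\textbf{b}}\subseteq\tau_{\textbf{b}}$ says precisely that it is continuous from the finer topology to the coarser one.

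For $\lambda_{\textbf{b}}\le\tau_{\textbf{b}}$ I would note that every character $\xi_n^{\textbf{b}}$ is $\tau_{\textbf{b}}$-continuous, because by the very definition of $V_{\textbf{b},m}$ one has $\xi_n^{\textbf{b}}(V_{\textbf{b},m})\subseteq\T_m$ for all $n,m\in\N$. Since Proposition \ref{duallineal} identifies $\lambda_{\textbf{b}}$ with the weak topology on $\Z$ induced by $\{\xi_n^{\textbf{b}}:n\in\N\}$, any group topology that makes every $\xi_n^{\textbf{b}}$ continuous must be finer than $\lambda_{\textbf{b}}$.

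For strictness, it suffices to exhibit a basic $\tau_{\textbf{b}}$-neighborhood of $0$ that fails to be a $\lambda_{\textbf{b}}$-neighborhood, and I claim $V_{\textbf{b},1}$ works: no $b_n\Z$ is contained in $V_{\textbf{b},1}$. Fix $n\in\N$ and put $d:=b_{n+1}/b_n$, an integer with $d\ge 2$ by the definition of a $D$-sequence. Take $k_0:=\lceil d/2\rceil\in\{1,\ldots,d-1\}$; a short case check ($k_0/d=1/2$ for even $d$, and $k_0/d=(d+1)/(2d)\in(1/2,3/4)$ for odd $d\ge 3$) shows that $k_0/d\in(1/4,3/4)$, so that $k_0/d+\Z\notin\T_+$. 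Consequently $\xi_{n+1}^{\textbf{b}}(k_0 b_n)=k_0/d+\Z\notin\T_+$, and hence $k_0 b_n\in b_n\Z\setminus V_{\textbf{b},1}$, as claimed. I do not anticipate any serious obstacle; the only delicate point is that the naive choice $k_0=1$ (which would place $b_n$ itself outside $V_{\textbf{b},1}$) fails exactly when $d\ge 4$, so one must take a representative closer to the middle of the residue system modulo $d$; the choice $k_0=\lceil d/2\rceil$ handles all $d\ge 2$ uniformly.
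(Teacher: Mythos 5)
Your proposal is correct, and for the strictness part it takes a genuinely different route from the paper. For the inequality $\lambda_{\textbf{b}}\le\tau_{\textbf{b}}$ the two arguments are close in spirit: the paper exploits metrizability and checks convergence of sequences via Propositions \ref{convergenciaentau_S} and \ref{convlambda}, while you argue directly that each $\xi_n^{\,\textbf{b}}$ is $\tau_{\textbf{b}}$-continuous (since $\xi_n^{\,\textbf{b}}(V_{\textbf{b},m})\subseteq\T_m$ and continuity at $0$ suffices for homomorphisms) and then invoke the identification of $\lambda_{\textbf{b}}$ as the weak topology from Proposition \ref{duallineal}; your version is slightly more direct and does not need metrizability. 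The real divergence is in the strictness claim: the paper argues through duality, noting that every equicontinuous subset of $(\Z,\lambda_{\textbf{b}})^\wedge$ is finite while $\{\frac{1}{b_n}+\Z:\ n\in\N\}\subseteq V_{\textbf{b},1}^{\triangleright}$ is an infinite equicontinuous set for $\tau_{\textbf{b}}$, so the topologies cannot coincide. You instead show by hand that the basic $\tau_{\textbf{b}}$-neighborhood $V_{\textbf{b},1}$ contains no subgroup $b_n\Z$, by exhibiting for each $n$ the element $k_0b_n$ with $k_0=\lceil d/2\rceil$, $d=b_{n+1}/b_n\ge 2$, and verifying $k_0/d+\Z\notin\T_+$; your case check is correct (including the observation that the naive choice $k_0=1$ fails for $d\ge 4$). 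Your argument is more elementary and self-contained; the paper's buys a structural fact (the existence of infinite equicontinuous sets in the dual of $(\Z,\tau_{\textbf{b}})$) that is exactly the phenomenon driving the whole construction, as explained in the introduction.
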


\begin{proof}
  Since both topologies are metrizable, it is sufficient to consider sequences. So let $(l_j)_{j\in\N}$ be a sequence which converges to $0$ in $\tau_\textbf{b}$. According to Proposition \ref{convergenciaentau_S}, $\xi_n^{\,\textbf{b}}(l_j)$ converges uniformly in $n$ to $0+\Z$, in particular it converges pointwise. So it is a consequence of Proposition \ref{convlambda} that $(l_j)$ converges to $0$ in $\lambda_{\textbf{b}}$.

 In order to prove that $\tau_{\textbf{b}}$ is strictly finer than $\lambda_{\textbf{b}}$, we consider
 equicontinuous subsets in the dual. According to Proposition \ref{duallineal}, all equicontinuous subsets of $(\Z,\lambda_{\textbf{b}})^\wedge$
 are finite. However, it is straight-forward to check that $\{\frac{1}{b_n}+\Z:\ n\in\N\}\subseteq V_{\textbf{b},1}^\triangleright
\subseteq V_{\textbf{b},m}^\triangleright$ for all $m\in \N$. Since $\{\frac{1}{b_n}+\Z:\ n\in\N\}$ is   infinite, these two topologies cannot coincide and hence  $\tau_{\textbf{b}}$ is strictly finer than $\lambda_{\textbf{b}}$.
\end{proof}

Since $\id:(\Z,\tau_{\textbf{b}})\to (\Z,\lambda_{\textbf{b}})$ is continuous, every $\lambda_{\textbf{b}}$-continuous
character is  $\tau_{\textbf{b}}$-continuous, hence we obtain:

\begin{corollary}\label{remark3.4}
 $(\Z ,\lambda_{\textbf{b}})^\wedge $ is a subgroup of $ (\Z,\tau_{\textbf{b}})^\wedge$.
\end{corollary}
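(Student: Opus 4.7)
The plan is to derive this as an immediate consequence of the preceding proposition, exactly as the sentence introducing the corollary already suggests. By Proposition \ref{remark3.3}, the topology $\tau_{\textbf{b}}$ is finer than $\lambda_{\textbf{b}}$, equivalently the identity map $\id\colon(\Z,\tau_{\textbf{b}})\to(\Z,\lambda_{\textbf{b}})$ is continuous.

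Given any $\chi\in(\Z,\lambda_{\textbf{b}})^\wedge$, I would observe that the composition $\chi\circ\id\colon(\Z,\tau_{\textbf{b}})\to\T$ is a composition of continuous homomorphisms, hence a continuous character of $(\Z,\tau_{\textbf{b}})$. Since $\chi\circ\id$ and $\chi$ agree as set-theoretic maps $\Z\to\T$, this shows $\chi\in(\Z,\tau_{\textbf{b}})^\wedge$, giving the set-theoretic inclusion $(\Z,\lambda_{\textbf{b}})^\wedge\subseteq(\Z,\tau_{\textbf{b}})^\wedge$.

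Finally, because the group operation on both character groups is pointwise addition inherited from $\Hom(\Z,\T)$, the inclusion is automatically a group homomorphism, so $(\Z,\lambda_{\textbf{b}})^\wedge$ is a subgroup of $(\Z,\tau_{\textbf{b}})^\wedge$. There is no real obstacle here; the content of the statement lies entirely in Proposition \ref{remark3.3}, where the strict comparison of the two topologies was actually established.
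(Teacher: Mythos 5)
Your proposal is correct and matches the paper's argument exactly: the paper also deduces the corollary from the continuity of $\id\colon(\Z,\tau_{\textbf{b}})\to(\Z,\lambda_{\textbf{b}})$ established in Proposition \ref{remark3.3}, noting that every $\lambda_{\textbf{b}}$-continuous character is therefore $\tau_{\textbf{b}}$-continuous. The only difference is that you spell out the (routine) compatibility with the group structure, which the paper leaves implicit.
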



\section{The dual group of $(\Z,\tau_\textbf{b})$ for  $\textbf{b}\in\mathcal{D}_\infty$}

The aim of this section is to determine the dual of $(\Z,\tau_\textbf{b})$ for all sequences
$\textbf{b}\in\mathcal{D}_\infty$.

\begin{proposition}\label{5.4.4}

   If integers $k_0,\ldots, k_N$ satisfy $\dis\  \left| k_j \right|\leq\frac{1}{8m}\cdot\frac{b_{j+1}}{b_j}$   for all $0\le j\le N$, then $\dis k=\sum_{j=0}^Nk_jb_j\in V_{\textbf{b},m}$.

\end{proposition}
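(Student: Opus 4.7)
The plan is to fix $n \in \N$ and show that $\frac{k}{b_n} + \Z \in \T_m$, which by definition of $V_{\textbf{b},m}$ will give the conclusion. The natural move is to split the sum at index $n$: write
\[
k \;=\; \sum_{j < n,\ j \leq N} k_j b_j \;+\; \sum_{n \leq j \leq N} k_j b_j.
\]
Since $b_n \mid b_j$ for every $j \geq n$, the second sum is an integer multiple of $b_n$ and hence contributes $0 + \Z$ when we pass to $\frac{k}{b_n} + \Z$. Thus it suffices to estimate the low part.

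For the bound, I combine the hypothesis $|k_j| \leq \frac{1}{8m}\cdot\frac{b_{j+1}}{b_j}$ with the geometric decay from Remark \ref{lema20110301}(2), which gives $b_n \geq 2^{\,n-(j+1)}\, b_{j+1}$ whenever $j+1 \leq n$. Therefore, for each $j$ with $0 \leq j \leq n-1$,
\[
\frac{|k_j|\, b_j}{b_n} \;\leq\; \frac{1}{8m}\cdot\frac{b_{j+1}}{b_n} \;\leq\; \frac{1}{8m}\cdot\frac{1}{2^{\,n-j-1}}.
\]
Summing yields
\[
\frac{1}{b_n}\Big|\sum_{j=0}^{n-1} k_j b_j\Big| \;\leq\; \frac{1}{8m}\sum_{i=0}^{n-1}\frac{1}{2^{i}} \;\leq\; \frac{2}{8m} \;=\; \frac{1}{4m},
\]
so $\frac{k}{b_n} + \Z \in \T_m$ as required. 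If $n > N$ the low part is simply all of $k$ and the same geometric estimate, applied up to $j = N$, gives $|k| \leq \frac{b_{N+1}}{4m} \leq \frac{b_n}{4m}$, so the conclusion still holds.

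There is no real obstacle here: the hypothesis $|k_j| \leq \frac{1}{8m}\cdot\frac{b_{j+1}}{b_j}$ is designed precisely so that, after dividing by $b_n$, the summand of index $j$ is bounded by $\frac{1}{8m \cdot 2^{n-j-1}}$, and the resulting geometric series collects to give exactly the threshold $\frac{1}{4m}$ that defines $\T_m$. The only point requiring care is the splitting of the sum, which uses the divisibility chain $b_n \mid b_{n+1} \mid b_{n+2} \mid \cdots$ built into the definition of a $D$-sequence.
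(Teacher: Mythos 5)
Your proof is correct and follows essentially the same route as the paper's: split $\sum_{j=0}^{N}k_jb_j$ at index $n$, discard the terms with $j\ge n$ via the divisibility $b_n\mid b_j$, and bound the remaining part by the geometric series $\frac{1}{8m}\sum_{i=0}^{n-1}2^{-i}\le\frac{1}{4m}$ using Remark \ref{lema20110301}(2). The only cosmetic difference is that you treat the case $n>N$ explicitly, which the paper absorbs into the notation $\sum_{j=0}^{N\wedge n-1}$.
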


\begin{proof}
  By assumption, we have $\dis \frac{\mid k_j\mid b_j}{b_{j+1}}\leq \frac{1}{8m}$ for all $j\in\N_0$.
Hence, for $1\le n\le N$  we obtain $$\dis   \left|\sum_{j=0}^{n-1}\frac{k_jb_j}{b_n}\right|\le \sum_{j=0}^{n-1}\frac{b_{j+1}}{b_n}\frac{\mid k_j\mid b_j}{b_{j+1}}\leq \frac{1}{8m}\sum_{j=0}^{n-1}\frac{b_{j+1}}{b_n} \stackrel{\ref{acotaciondsequence}}{\leq}\frac{1}{8m} \sum_{i=0}^{n-1}\frac{1}{2^i}\leq\frac{1}{4m}.$$
This yields $\dis \xi_n^{\textbf{b}}(k)=\frac{k}{b_n}+\Z=\frac{1}{b_n}\sum_{j=0}^Nk_jb_j+\Z=\frac{1}{b_n}\sum_{j=0}^{N\wedge n-1} k_jb_j+\Z\in\T_m$
for all $n\in\N$. So the assertion follows from Proposition \ref{convergenciaentau_S}.
\end{proof}

In \cite{unpublished} a proof of the following lemma can be found:

\begin{lemma}\label{1/bn}
For any $x\in\left(-\frac{1}{2},\frac{1}{2}\right]$, there exists a representation $\dis x=\sum_{n\in\N}\frac{d_n}{b_n}$, where $\dis \left| {d_n} \right|\leq\frac{b_n}{2b_{n-1}} $ and $d_n\in\Z$ for all $n\in\N$.
\end{lemma}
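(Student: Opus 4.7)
The plan is to construct the integers $d_n$ recursively by a greedy algorithm, tracking the residual $x_n := x - \sum_{k=1}^{n} \frac{d_k}{b_k}$ and maintaining the invariant $|x_n| \le \frac{1}{2 b_n}$. Setting $b_0 := 1$ and $x_0 := x$, the base case is just the hypothesis $|x| \le \frac{1}{2}$.

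Given $x_{n-1}$ satisfying the invariant, I choose $d_n \in \Z$ so that $y_n := b_n x_{n-1} - d_n$ lies in $[-\tfrac{1}{2}, \tfrac{1}{2}]$; such an integer always exists, and if $b_n x_{n-1}$ is a half-integer there are two candidates, in which case I select the one of smaller absolute value. Setting $x_n := y_n/b_n = x_{n-1} - d_n/b_n$ then propagates the invariant, since $|x_n| = |y_n|/b_n \le 1/(2 b_n)$.

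The heart of the argument is verifying the bound $|d_n| \le \frac{b_n}{2 b_{n-1}} =: M_n$. From the invariant we have $|b_n x_{n-1}| \le M_n$. Writing $q := b_n/b_{n-1} \in \N$, so that $M_n = q/2$: if $q$ is even, then $M_n \in \N$ and the nearest integer to any point of $[-M_n, M_n]$ already lies in $\{-M_n, \dots, M_n\}$; if $q$ is odd, say $q = 2k+1$, then $M_n = k + \tfrac12$, and for $b_n x_{n-1} \in [-(k+\tfrac12), k+\tfrac12]$ the nearest integer with ties broken toward $0$ lies in $\{-k, \dots, k\}$, so $|d_n| \le k < M_n$. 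Either way the bound holds, and $d_n$ is an integer because it was chosen that way.

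Finally, since $\sum_{k=1}^{N} d_k/b_k = x - x_N$ and $|x_N| \le 1/(2 b_N) \le 2^{-(N+1)} \to 0$ (using $b_N \ge 2^N$ from Remark \ref{acotaciondsequence}(2)), the partial sums converge to $x$, yielding the desired representation. The main technical obstacle is exactly the boundary behaviour when $b_n/b_{n-1}$ is odd: there one must carefully break ties toward $0$, for otherwise the ``naive'' rounding rule could produce $|d_n| = M_n + \tfrac12$ and violate the constraint.
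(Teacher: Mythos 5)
Your argument is correct. Note that the paper itself gives no proof of this lemma --- it only refers to the preprint \cite{unpublished} --- so there is nothing internal to compare against; your greedy construction (maintain the residual invariant $|x_n|\le \frac{1}{2b_n}$, round $b_nx_{n-1}$ to a nearest integer, and use $b_N\ge 2^N$ for convergence) is the standard and essentially inevitable way to obtain such a mixed-radix expansion, and it fills the gap self-containedly. The one genuinely delicate point, the boundary case where $b_n/b_{n-1}$ is odd and $b_nx_{n-1}=\pm\frac{b_n}{2b_{n-1}}$, is exactly where a naive rounding rule would overshoot the bound $|d_n|\le\frac{b_n}{2b_{n-1}}$, and your tie-breaking toward $0$ handles it correctly.
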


 \begin{lemma}\label{wwi}Let $\textbf{b}=(b_n)_{n\in\N_0}$ be a $D$-sequence and let $\dis x=\sum_{n\in\N}\frac{d_n}{b_n}$
 be as in Lemma \ref{1/bn}.
For
 $$b_jx+\Z =\sum_{n\ge j+1}b_j\frac{d_n}{b_n}+\Z=\underbrace{b_j\frac{d_{j+1}}{b_{j+1}}+b_j\frac{d_{j+2}}{b_{j+2}}}_{=:e_j}+\underbrace{b_j\sum_{n\ge j+3}\frac{d_n}{b_n}}_{=:\eps_j}+\Z$$
 the following estimates hold for all $j\in\N_0$:
 \begin{enumerate}
  \item $\dis |e_j|\le \frac{3}{4}$,
  \item if $d_{j+1}\not=0$, then $|e_j|\ge\frac{b_j}{2b_{j+1}}$, and
 \item $\dis |\eps_j|\le  \frac{b_j}{b_{j+2}}$.
 \end{enumerate}
  \end{lemma}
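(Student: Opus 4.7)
The plan is to verify the three inequalities by direct computation, exploiting the hypothesis $|d_n| \le \frac{b_n}{2 b_{n-1}}$ together with basic divisibility facts about $D$-sequences.

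The preliminary observation (used implicitly in the displayed decomposition) is that for $n \le j$ the divisibility $b_n \mid b_j$ forces $b_j \frac{d_n}{b_n} \in \Z$, so these terms disappear modulo $\Z$; this justifies the reduction to the sum over $n \ge j+1$. From there, splitting off the $n=j+1,\,j+2$ terms as $e_j$ and leaving the tail as $\varepsilon_j$ is just bookkeeping.

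For (1) I would apply the triangle inequality and the bound on $|d_n|$:
\[
|e_j|\le \frac{b_j|d_{j+1}|}{b_{j+1}} + \frac{b_j|d_{j+2}|}{b_{j+2}} \le \frac{b_j}{b_{j+1}}\cdot\frac{b_{j+1}}{2b_j} + \frac{b_j}{b_{j+2}}\cdot\frac{b_{j+2}}{2b_{j+1}} = \tfrac{1}{2}+\tfrac{b_j}{2b_{j+1}},
\]
and then use $\frac{b_j}{b_{j+1}}\le \frac{1}{2}$ (from Remark \ref{acotaciondsequence}(2) with $k=1$) to conclude $|e_j|\le 3/4$. For (2), assuming $d_{j+1}\neq 0$ gives $|d_{j+1}|\ge 1$, and the reverse triangle inequality combined with the same bound on $|d_{j+2}|$ yields
\[
|e_j|\ge \frac{b_j|d_{j+1}|}{b_{j+1}} - \frac{b_j|d_{j+2}|}{b_{j+2}} \ge \frac{b_j}{b_{j+1}} - \frac{b_j}{2b_{j+1}} = \frac{b_j}{2b_{j+1}}.
\]

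For (3), the plan is to estimate termwise and invoke Remark \ref{acotaciondsequence}(3):
\[
|\varepsilon_j|\le b_j\sum_{n\ge j+3}\frac{|d_n|}{b_n}\le b_j\sum_{n\ge j+3}\frac{1}{2b_{n-1}} = \frac{b_j}{2}\sum_{m\ge j+2}\frac{1}{b_m}\le \frac{b_j}{2}\cdot\frac{2}{b_{j+2}}=\frac{b_j}{b_{j+2}}.
\]
There is no real obstacle here; the only point requiring a little care is remembering to reindex $m=n-1$ in the tail sum before applying Remark \ref{acotaciondsequence}(3), and to use the sharp form $\frac{b_j}{b_{j+1}}\le\frac12$ in (1) rather than a weaker bound, in order to hit the stated constant $3/4$ exactly.
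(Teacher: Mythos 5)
Your proposal is correct and follows essentially the same computation as the paper: termwise use of the bound $|d_n|\le \frac{b_n}{2b_{n-1}}$ with the triangle inequality for (1), the reverse triangle inequality for (2), and Remark \ref{acotaciondsequence}(3) for the tail in (3). The only (welcome) addition is that you make explicit why the terms with $n\le j$ vanish modulo $\Z$, which the paper leaves implicit.
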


\begin{proof}

 \noindent(1) We have $\dis |e_j| \le  b_j\frac{|d_{j+1}|}{b_{j+1}}+b_j\frac{|d_{j+2}|}{b_{j+2}}\stackrel{\ref{1/bn}}{\le}
 \frac{1}{2}+\frac{b_j}{2b_{j+1}}\le\frac{3}{4}$.

  \noindent(2) Let $\dis d_{j+1}\not=0$, then $|e_j|\ge b_j\left(\frac{|d_{j+1 }|}{b_{j+1}}- \frac{|d_{j+2 }|}{b_{j+2}}\right)
 \stackrel{\ref{1/bn}}{\ge} b_j\left(\frac{|d_{j+1 }|}{b_{j+1}}- \frac{1}{2b_{j+1}}\right)\ge \frac{b_j}{2b_{j+1}}.$

 \noindent(3) We have $\dis |\eps_j|\le b_j \sum_{n\ge j+3}\frac{|d_n|}{b_n}\stackrel{\ref{1/bn}}{\le} b_j \sum_{n\ge j+3}\frac{1}{2b_{n-1}}=
 \frac{b_j}{2}\sum_{n\ge j+2}\frac{1}{b_n}\stackrel{\ref{lema20110301} (3)}{\le}\frac{b_j}{2}\cdot\frac{2}{b_{j+2}}=\frac{b_j}{b_{j+2}}$.
\end{proof}

\begin{theorem}\label{dualsicocientesacero}
  For  $\textbf{b}\in\mathcal{D}_\infty$ we have $(\Z,\tau_\textbf{b})^\wedge=\langle\{\frac{1}{b_n}+\Z\mid n\in\N\}\rangle=(\Z,\lambda_\textbf{b})^\wedge$.

\end{theorem}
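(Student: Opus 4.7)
The inclusion $(\Z,\lambda_{\textbf{b}})^\wedge\subseteq(\Z,\tau_{\textbf{b}})^\wedge$ is already Corollary~\ref{remark3.4}, so the content of the theorem is the reverse inclusion. Take a $\tau_{\textbf{b}}$-continuous $\chi$ with $\chi(1)=x+\Z$, $x\in(-\tfrac12,\tfrac12]$, and write $x=\sum_{n\ge1}\tfrac{d_n}{b_n}$ as in Lemma~\ref{1/bn}. It suffices to show that $d_n=0$ for all sufficiently large $n$: then $x\in\tfrac{1}{b_N}\Z$ for some $N$, and $\chi$ is an integer multiple of $\xi_N^{\textbf{b}}\in\langle\{\xi_n^{\textbf{b}}:n\in\N\}\rangle$.

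By continuity, pick $m\in\N$ with $\chi(V_{\textbf{b},m})\subseteq\T_+$ and set $K_j:=\lfloor b_{j+1}/(8mb_j)\rfloor$; since $\textbf{b}\in\mathcal D_\infty$, $K_j\to\infty$. Proposition~\ref{5.4.4} guarantees that any combination $\sum k_jb_j$ with $|k_j|\le K_j$ lies in $V_{\textbf{b},m}$; in particular each $k\cdot b_j$ with $|k|\le K_j$ does. Let $a_j^*\in(-\tfrac12,\tfrac12]$ represent $\chi(b_j)=b_jx+\Z$. A short induction on $k$, exploiting that $[-\tfrac12,\tfrac12]$ contains only one lift of $\T_+$, yields $|a_j^*|\le\tfrac{1}{4K_j}$, so $a_j^*\to 0$. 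By Lemma~\ref{wwi}, $b_jx\equiv e_j+\varepsilon_j\pmod 1$ with $|e_j|\le\tfrac34$ and $|\varepsilon_j|\le b_j/b_{j+2}\to 0$; the case $|e_j+\varepsilon_j|>\tfrac12$ would force $|a_j^*|\ge\tfrac14-o(1)$, contradicting $a_j^*\to 0$, so for $j$ large we have $a_j^*=e_j+\varepsilon_j$ as real numbers.

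Assume for contradiction that $J:=\{j:d_{j+1}\ne 0\}$ is infinite. For $j\in J$ large, Lemma~\ref{wwi}(2,3) supplies the matching lower bound $|a_j^*|\ge\tfrac{b_j}{2b_{j+1}}-\tfrac{b_j}{b_{j+2}}\ge\tfrac{b_j}{4b_{j+1}}$, so $K_j|a_j^*|\in[\tfrac{1}{64m},\tfrac14]$. Fix $j_1<j_2<\cdots$ in $J$ satisfying both estimates, set $k_{j_i}:=K_{j_i}\,\mathrm{sgn}(a_{j_i}^*)$, and consider $k^{(N)}:=\sum_{i=1}^N k_{j_i}b_{j_i}\in V_{\textbf{b},m}$. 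Then $\chi(k^{(N)})\equiv\sum_{i=1}^N K_{j_i}|a_{j_i}^*|\pmod 1$ is a monotonic real sum with increments in $[\tfrac{1}{64m},\tfrac14]$. Letting $N$ be minimal with this sum exceeding $\tfrac14$, the partial sum lies in $(\tfrac14,\tfrac12]$, whose representative modulo $\Z$ is itself and hence lies outside $\T_+$, contradicting $k^{(N)}\in V_{\textbf{b},m}$. The main obstacle is this matched two-sided control $|a_j^*|\asymp b_j/b_{j+1}$: the upper bound uses Proposition~\ref{5.4.4} together with the quasi-convexity of $\T_+$, the lower bound uses the precise decomposition of Lemma~\ref{wwi}, and it is the fact that both bounds land at the same order of magnitude that allows the stacked partial sums to be forced into the forbidden gap $(\tfrac14,\tfrac12]$ at the first crossing.
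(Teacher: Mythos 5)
Your proof is correct and follows essentially the same route as the paper's: the same digit expansion of $x$ (Lemma~\ref{1/bn}), the same decomposition $b_jx\equiv e_j+\eps_j$ (Lemma~\ref{wwi}), and the same use of Proposition~\ref{5.4.4} to build, under the assumption that infinitely many digits are nonzero, an element of $V_{\textbf{b},m}$ that $\chi$ maps outside $\T_+$. The only differences are bookkeeping: the paper chooses adaptive coefficients $k_j=\lfloor 1/(16m|e_j|)\rfloor\,\mathrm{sign}(e_j)$ so that every increment is comparable to $1/(16m)$ and then sums exactly $8m$ of them to land in $(1/4,2/3)$, whereas you take the maximal admissible $k_j=K_j$, extract the upper bound $K_j|a_j^*|\le 1/4$ from the polar condition by the standard one-lift induction, and stop at the first crossing of $1/4$ --- both variants are valid.
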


\begin{proof}
It follows from  Corollary \ref{remark3.4} and  Proposition \ref{duallineal} that
$(\Z,\tau_\textbf{b})^\wedge\ge(\Z,\lambda_\textbf{b})^\wedge= \langle\{\frac{1}{b_n}+\Z\mid n\in\N\}\rangle $ holds.

  In order to prove the other inclusion,  we fix $\chi\in (\Z,\tau_\textbf{b})^\wedge$. Let  $  \left(-\frac{1}{2},\frac{1}{2}\right]\ni  x=\sum_{n=1}^\infty\frac{d_n}{b_n}$  (as in  Lemma \ref{1/bn}) satisfy $\chi(1)=x+\Z$.

\noindent \underline{Claim:} The sequence $\dis (e_j)_{j\ge_0}:=\left(b_j\frac{d_{j+1}}{b_{j+1}}+b_j\frac{d_{j+2}}{b_{j+2}}\right)_{j\ge 0}$ converges to  $  0$ in $\R$.

\noindent \underline{Proof of the claim:} Since by Lemma \ref{wwi} we have $\dis |e_j| \le \frac{3}{4}$,
it is sufficient to show that $e_j+\Z\to 0+\Z$ in $\T$.

  By Lemma \ref{bconvergeaceroentauS}, $(b_{j })$ tends to $0$ in $\tau_\textbf{b}$. Hence the continuity of $\chi$
  implies that $\chi(b_{j })=b_{j }x+\Z=e_j+\eps_j+\Z\longrightarrow 0+\Z$ for $j\to \infty$ in $\T$.

Since by Lemma \ref{wwi} $|\eps_j|\le\frac{b_j}{b_{j+2}}=\frac{b_j}{b_{j+1}}\frac{b_{j+1}}{b_{j+2}}$, the assumption $\textbf{b}\in{\mathcal D}_\infty$ implies that the sequence
$(\eps_j)$ converges to $0$ in $\R$. Together with $\lim_j (e_j+\eps_j) +\Z= 0+\Z$ this yields that $(e_j+\Z)$ tends to $0+\Z$ in $\T$ and proves the claim.

\vspace{0.3cm}

  Suppose that $A:=\{j \in\N:\ d_{j+1 }\neq 0\}$ is infinite.
  Since $\chi\in (\Z,\tau_\textbf{b})^\wedge$, there exists $m\in \N$ such
  that $\chi\in V_{\textbf{b},m}^\triangleright$. In order to obtain a contradiction,
  we want to find an integer $k\in V_{\textbf{b},m}$ such that $\chi(k)=xk+\Z\notin\T_+$.

By the claim and the hypothesis $\textbf{b}\in\mathcal{D}_\infty$   there exists  $j_0$ such that for all $j\ge j_0$

 $$(a)\
 \left| e_j\right| <  \frac{1}{96 m} \quad\quad\quad\mbox{and}\quad\quad\quad
  (b)  \   \left|\frac{b_j}{b_{j+1}}\right|<\frac{1}{6}$$ hold.

For $j\in A$ with $j\ge j_0$ we define $\dis k_{
j}:=\left\lfloor \frac{1}{16m|e_{j}|} \right\rfloor {\rm sign}(e_{
{j}})$. Observe that by Lemma \ref{wwi} (2), $e_j\not=0$ for all $j\in A$.

We have $\dis k_j e_j=\left\lfloor \frac{1}{16m|e_{j}|}
\right\rfloor|e_j|$, which implies
 $k_j e_j\le\frac{1}{16m}.$

 On the other hand, $\dis k_je_j\ge
 \left(\frac{1}{16m|e_{j}|}-1\right)|e_j|=\frac{1}{16m}-|e_j|\stackrel{(a)}{>}  \frac{1}{16m}-\frac{1}{96m}=\frac{5}{96m}.$

Further,  we have
$$|k_j\eps_j|\stackrel{\ref{wwi} (3)}{\le} \frac{1}{ 16m|e_{j}|} \cdot \frac{b_j}{b_{j+2}}
\stackrel{\ref{wwi} (2)}{\le} \frac{1}{16m}\cdot\frac{2b_{j+1}}{  b_j}\cdot\frac{b_j}{b_{j+2}}=
\frac{1}{ 8m}\frac{b_{j+1}}{b_{j+2}}\stackrel{(b)}{<}
\frac{1}{ 8m}\cdot \frac{1}{6}=\frac{1}{48m}.$$

Combining these estimates, we obtain

\begin{equation*} \frac{1}{32m}=\frac{5 }{96 m }-\frac{1}{48m} < k_j(e_j+\eps_j)< \frac{1 }{16m} +\frac{1}{48m}=\frac{1}{12m}.\quad\quad(\ast)
\end{equation*}

 We choose a subset $J$ of $A\cap \{j:\ j\ge j_0\}$ containing
 exactly $8m$ elements and define $k:=\sum_{j\in J}k_jb_j$.
 Since $$\dis \left|\frac{k_jb_j}{b_{j+1}}\right|\le \frac{1}{16m|e_j|}\cdot \frac{ b_j}{b_{j+1}}
 \stackrel{\ref{wwi}(2)}{\le}\frac{1}{16m}\cdot \frac{2b_{j+1}}{  b_j}\cdot\frac{ b_j}{b_{j+1}}=\frac{1}{8m}  ,$$ we obtain that $k\in
 V_{\textbf{b},m}$ (Proposition \ref{5.4.4}) and $\chi(k)=\sum_{j\in J}k_jx+\Z=\sum_{j\in
 J}k_j(e_j+\eps_j)+\Z$.
From ($\ast$) we conclude
 $$\frac{1}{4 }<  \sum_{j\in
 J}k_j(e_j+\eps_j)<\frac{2}{3}<
 \frac{3}{4 },$$
which yields  $\chi(k)=\sum_{j\in J}k_jx+\Z=\sum_{j\in J}k_j(e_j+\eps_j)+\Z\notin \T_+$. This is the desired contradiction.

\end{proof}

\begin{corollary}
The image under the inclusion $(\Z,\tau_{\textbf{b}})\to {\rm c}_0(\T),\ k\mapsto(\frac{k}{b_n}+\Z)_{n\in\N}$ is dually embedded.
\end{corollary}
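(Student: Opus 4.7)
The plan is to unwind ``dually embedded'' as the statement that every continuous character of $(\Z,\tau_{\textbf{b}})$, viewed as a subgroup of ${\rm c}_0(\T)$ through the embedding $j:k\mapsto(\frac{k}{b_n}+\Z)_{n\in\N}$, admits a continuous extension to ${\rm c}_0(\T)$. So the task reduces to producing, for each $\chi\in(\Z,\tau_{\textbf{b}})^\wedge$, a continuous character $\tilde\chi$ on ${\rm c}_0(\T)$ with $\tilde\chi\circ j=\chi$.

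First I would invoke Theorem \ref{dualsicocientesacero} to describe the characters to be extended: it gives $(\Z,\tau_{\textbf{b}})^\wedge=\langle\{\xi_n^{\,\textbf{b}}:n\in\N\}\rangle$. Since $b_n\mid b_N$ whenever $n\le N$, the identity $\xi_n^{\,\textbf{b}}=(b_N/b_n)\,\xi_N^{\,\textbf{b}}$ lets me collapse any finite $\Z$-linear combination of the generators to a single summand, so every $\chi\in(\Z,\tau_{\textbf{b}})^\wedge$ can be written as $\chi(k)=m\,\xi_N^{\,\textbf{b}}(k)=\frac{mk}{b_N}+\Z$ for some $m\in\Z$ and $N\in\N$.

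Next, the natural candidate is $\tilde\chi:=m\,\pi_N:{\rm c}_0(\T)\to\T$, $(t_n)_{n\in\N}\mapsto m\,t_N$, where $\pi_N$ denotes the $N$-th coordinate projection. I would check two routine facts: that $\tilde\chi$ is a continuous character of ${\rm c}_0(\T)$ (continuity of $\pi_N$ with respect to the sup topology is immediate, and multiplication by the integer $m$ preserves both the homomorphism property and continuity), and that $\tilde\chi\circ j=\chi$ via the direct computation $\tilde\chi\circ j(k)=m\,(\frac{k}{b_N}+\Z)=\chi(k)$.

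There is no serious obstacle here: essentially all the work has already been done in Theorem \ref{dualsicocientesacero}, which pins the dual of $(\Z,\tau_{\textbf{b}})$ down so sharply that the coordinate projections of the ambient ${\rm c}_0(\T)$ already account for every extension needed. The only mild subtlety worth flagging is the use of the divisibility $b_n\mid b_N$ to reduce an arbitrary character to a single $m\,\xi_N^{\,\textbf{b}}$, which is what makes one coordinate projection sufficient.
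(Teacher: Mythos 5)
Your proof is correct and follows essentially the same route as the paper: both rest on Theorem \ref{dualsicocientesacero} to identify $(\Z,\tau_{\textbf{b}})^\wedge$ with $\langle\{\xi_n^{\,\textbf{b}}\}\rangle$ and then extend each character by an integer multiple of a coordinate projection of ${\rm c}_0(\T)$. The paper merely compresses this into citing the description $({\rm c}_0(\T))^\wedge=\{\sum_{j=1}^N k_j\pi_j\}$, of which you only use the (easy) fact that such combinations are continuous characters, together with the divisibility reduction $\xi_n^{\,\textbf{b}}=(b_N/b_n)\xi_N^{\,\textbf{b}}$ that the paper had already recorded as $\langle\{\xi_m^{\,\textbf{b}}\}\rangle=\bigcup_m\langle\xi_m^{\,\textbf{b}}\rangle$.
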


\begin{proof}
It is sufficient to know that $({\rm c}_0(\T))^\wedge=\{\prod_{j=1}^Nk_j\pi_j :\ k_j\in\Z,\ N\in\N\}$
where $\pi_j:{\rm c}_0(\T)\to \T$ denotes the projection on the $j$-th  coordinate and $k_j\pi_j((x_n+\Z))=k_jx_j$.
\end{proof}

\begin{theorem}\label{linnotM}
 Let $\lambda$ be a non-discrete Hausdorff linear topology on $\Z$, then $\lambda$ is not Mackey. In particular the $p$-adic topologies are not Mackey.

\end{theorem}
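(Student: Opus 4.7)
The plan is to read off the theorem as a direct consequence of the machinery already assembled in the previous sections: everything has been set up so that once Theorem \ref{dualsicocientesacero} is in hand, the conclusion about the Mackey property is essentially formal.

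First I would reduce the problem to a $D$-sequence of the special form needed for the main theorem. By Proposition \ref{topologiaslinealesnosondiscretas}, the given non-discrete Hausdorff linear topology $\lambda$ can be written as $\lambda_{\textbf{b}}$ for some $\textbf{b}\in\mathcal{D}$. Then I would apply Proposition \ref{igualdadlineales} to replace $\textbf{b}$ by a subsequence $\textbf{c}\in\mathcal{D}_\infty$ satisfying $\lambda_{\textbf{b}}=\lambda_{\textbf{c}}$. So without loss of generality $\lambda=\lambda_{\textbf{c}}$ with $\textbf{c}\in\mathcal{D}_\infty$.

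Next I would invoke the topology $\tau_{\textbf{c}}$ of uniform convergence on $\{\xi_n^{\,\textbf{c}}:n\in\N\}$. Proposition \ref{Stopologiassontopologias} guarantees that $\tau_{\textbf{c}}$ is a locally quasi-convex group topology on $\Z$, and Proposition \ref{remark3.3} shows that $\tau_{\textbf{c}}$ is strictly finer than $\lambda=\lambda_{\textbf{c}}$. Now, because $\textbf{c}\in\mathcal{D}_\infty$, Theorem \ref{dualsicocientesacero} applies and gives the crucial equality
\[
(\Z,\tau_{\textbf{c}})^\wedge=\langle\{\tfrac{1}{c_n}+\Z:n\in\N\}\rangle=(\Z,\lambda_{\textbf{c}})^\wedge=(\Z,\lambda)^\wedge,
\]
so $\tau_{\textbf{c}}$ is a locally quasi-convex topology compatible with $\lambda$.

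Finally, since $\tau_{\textbf{c}}$ is a strictly finer locally quasi-convex group topology on $\Z$ with the same continuous characters as $\lambda$, the topology $\lambda$ cannot be the finest locally quasi-convex group topology compatible with $(\Z,\lambda)^\wedge$; hence $\lambda$ is not Mackey. The particular case of the $p$-adic topology corresponds to $\textbf{b}=(p^n)_{n\in\N_0}$, which lies in $\mathcal{D}$ and (after passing to a subsequence as above if desired) yields the stated negative answer to Dikranjan's question. The only genuine work was already done in Theorem \ref{dualsicocientesacero}; there is no remaining obstacle here, just a clean bookkeeping step to combine the earlier results.
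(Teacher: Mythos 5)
Your proposal is correct and follows essentially the same route as the paper: reduce to $\lambda=\lambda_{\textbf{b}}$ via Proposition \ref{topologiaslinealesnosondiscretas}, pass to $\textbf{c}\in\mathcal{D}_\infty$ with $\lambda_{\textbf{b}}=\lambda_{\textbf{c}}$ via Proposition \ref{igualdadlineales}, and then combine Theorem \ref{dualsicocientesacero} with the strict inequality $\lambda_{\textbf{c}}<\tau_{\textbf{c}}$ from Proposition \ref{remark3.3}. The only difference is that you explicitly cite Proposition \ref{Stopologiassontopologias} for local quasi-convexity of $\tau_{\textbf{c}}$, which the paper leaves implicit; the argument is the same.
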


\begin{proof}
 Since $\lambda$ is not discrete, $\lambda=\lambda_\textbf{b}$ for some $\textbf{b}\in\mathcal{D}$ (Proposition \ref{topologiaslinealesnosondiscretas}). According to Proposition \ref{igualdadlineales}, there exists $\textbf{c}\in\mathcal{D}_\infty$ such that $\lambda_\textbf{b} = \lambda_\textbf{c}$. By Theorem \ref{dualsicocientesacero}, we have $(\Z,\tau_{\textbf{c}})^\wedge=(\Z,\lambda_{\textbf{c}})^\wedge=
(\Z,\lambda_{\textbf{b}})^\wedge$. Since $\lambda_\textbf{b}=\lambda_\textbf{c}\stackrel{\ref{remark3.3}}{<}\tau_\textbf{c}$, the
linear topology $\lambda_{\textbf{b}}$ is not the Mackey topology.
\end{proof}

\section{Open Questions}

\begin{question}
Let $\textbf{b}\in\mathcal{D}$. Does there exist a Mackey topology for the pair $(\Z,\langle\{\frac{1}{b_n}+\Z\mid n\in\N\}\rangle)$?
\end{question}

In case it exists, it must be finer than all $\tau_c$ where $\textbf{c}\in\mathcal{D}_\infty$ and $\lambda_{\textbf{c}}=\lambda_{\textbf{b}}$.

\begin{question}
Fix $\textbf{b}\in\mathcal{D}$.
Is $s_{\textbf{b}}:=\sup\{\tau_{\textbf{c}}:\ \textbf{c}\in\mathcal{D}_\infty,\ \lambda_{\textbf{c}}=\lambda_{\textbf{b}}\}$ a compatible group topology?
\end{question}
 In case this is true, the following question arises:
\begin{question}
Is $s_{\textbf{b}}$ the Mackey topology?
\end{question}

The following four questions of decreasing power have been suggested by Dikran Dikranjan:

\begin{question}
Is there any metrizable locally quasi-convex group topology on $\Z$ which is Mackey?
\end{question}

\begin{question}
Is there any metrizable precompact group topology on $\Z$ which is Mackey?
\end{question}

\begin{question}
Is there any metrizable precompact  Mackey group topology on $\Z$   which does not have  proper open subgroups?
\end{question}

\begin{question}
Let $\alpha$ be an irrational number and denote by $\lambda_\alpha$ the initial topology induced on $\Z$ by the mapping
$\Z\to \T,\ k\mapsto \alpha k+\Z$.  Is  $\lambda_\alpha$ Mackey?
\end{question}

\noindent{\bf Acknowledgement}:
We would like to thank Elena Mart\'{\i}n Peinador and Vaja Tarieladze for useful comments and we are indebted to Dikran Dikranjan whose
 suggestions led to an improvement of the paper.
We also wish to thank him for his interest in this topic and the formulation of the questions 5.4 to 5.7.
Finally, we are indebted to the referee for his/her careful reading of the text, many useful suggestions and for
having pointed out an error in the former version.

\bibliographystyle{plain}

\end{document}